\numberwithin{equation}{section}
\newtheorem{theo}{Theorem}[section] 
\newtheorem{lem}[theo]{Lemma}
\newtheorem{rem}[theo]{Remark}
\newtheorem{prop}[theo]{Proposition}
\newcommand{\R}{\mathbb{R}}
\newcommand{\C}{\mathbb{C}}
\def \intb {\int_{\partial \Omega}}
\def \intOm {\int_{\Omega}}
\def \nn   {\vec{n}}
\def \WW {\mathcal{W}}
\newcommand{\nablaT}{ {\slash\!\!\!\!  \nabla} }
\def \Nls {{\rm{(NLS$_\Omega$) }}}
\def \NNls {{\rm{NLS$_\Omega$ }}}
\def \nls {{\rm{(NLS) }}}
\def \nnls {{\rm{NLS }}}
\DeclareMathOperator{\re}{Re}
\DeclareMathOperator{\im}{Im}
\title[Blow-up on NLS$_{\Omega}$]{On Blow-up solutions to the nonlinear Schr\"odinger equation in the exterior of a convex obstacle}
\author[Oussama Landoulsi]{Oussama Landoulsi$^1$}
\email[Oussama Landoulsi]{landoulsi@math.univ-paris13.fr}
\thanks{$^1$LAGA, UMR 7539, Institut Galil\'ee, Universit\'e Sorbonne Paris Nord}
\address{LAGA, UMR 7539, Institut Galil\'ee, Universit\'e Sorbonne Paris Nord}
\date{\today}
\subjclass[2010]{Primary 35Q55 ;  Secondary 35B40, 35B44, 35G30, 35K20, 58J32}
\keywords{Focusing NLS equation, exterior domain, boundary value problem, blow-up.}
\begin{document}
\maketitle

\begin{abstract}
 In this paper, we consider the Schr\"odinger equation with a mass-supercritical focusing nonlinearity, in the exterior of a smooth, compact, convex obstacle of $\R^d$ with Dirichlet boundary conditions. We prove that solutions with negative energy blow up in finite time. Assuming furthermore that the nonlinearity is energy-subcritical, we also prove (under additional symmetry conditions) blow-up with the same optimal ground-state criterion than in the work of Holmer and Roudenko on $\R^d$. The classical proof of Glassey, based on the concavity of the variance, fails in the exterior of an obstacle because of the appearance of boundary terms with an unfavorable sign in the second derivative of the variance. The main idea of our proof is to introduce a new modified variance which is bounded from below and strictly concave for the solutions that we consider.
\end{abstract}

\tableofcontents
\newpage
 \section{Introduction}
We consider the focusing nonlinear Schr\"odinger equation with Dirichlet~boundary~conditions.
	\begin{multline}
	\label{NLS} 
		\tag{NLS$_{\Omega}$}
			\left\{
			\begin{array}{rrrrrrrr}
			\begin{aligned}
	i\partial_tu+\Delta_{\Omega} u &= -|u|^{p-1}u  \qquad  & (t,x)\in \R \times\Omega ,\\ u(t_0,x) &=u_0(x)   & x \in \Omega  , \\
	u(t,x)&=0  &(t,x)\in \R \times \partial\Omega ,
		\end{aligned}
			\end{array}
		\right. \end{multline}  
where $\Omega=\R^d \setminus \Theta$ is the exterior of a smooth, compact and convex obstacle on $\R^d,$ 
$\Delta_{\Omega}$ is the Dirichlet Laplace operator, $\partial_t$ is the derivative with respect to the time variable and $t_0 \in \R$ is the initial time. The function $u$ is complex-valued, $u:\R \times \Omega \longrightarrow \C, \, (t,x)\longmapsto u(t,x).$ \\ 
 
 
The local well-posedness for the \NNls equation in the exterior of a smooth, compact and convex domain was studied in several articles and is now well understood in many cases. Local existence and uniqueness are usually proved by contraction mapping methods via Strichartz estimates. The Cauchy theory for the \NNls equation with initial data in $H^1_0(\Omega),$ was initiated in  \cite{BuGeTz04a}. The authors proved 
first a Strichartz estimates under some restriction, which led to a local existence result for $p<3, d=3.$ This results was later extended for all $1<p\leq 5,$ in dimension $d=3$, in particular, see \cite{AnRA08} for the cubic nonlinearity, \cite{PlVe09} for $1<p<5$ and \cite{OanaFabrice2010} for $p=5,$ see also \cite{BlairSmithSogge2012}. The full range of Strichartz estimates for \NNls except the end point was obtained in  \cite{Ivanovici10}. Therefore, the \NNls equation is locally-well posed in $H^1_0(\Omega),$ for $1<p <\frac{d+2}{d-2}, d\geq 4 $ and $1<p<\infty,$ for $d=2$ (Cf. Proposition \ref{well-posed} below for a precise local well posedness statement needed for our purpose).  \\


 The solutions of (NLS$_{\Omega}$) satisfy the mass and energy conservation laws: 
 \begin{align*}
 M[u(t)] &:= \int_{\Omega} |u(t,x)|^2 dx = M[u_0] , \\
E[u(t)] &:= \frac{1}{2} \int_{\Omega} |\nabla u(t,x)|^2 dx - \frac{1}{p+1} \int_{\Omega}\left|u(t,x)\right|^{p+1}dx=E[u_0]. 
\end{align*}

In~\cite{KiVisnaZhang16}, the authors proved global existence and scattering of solutions for the focusing $3$d cubic \NNls equation, whenever the initial data satisfies a smallness criterion given by the ground state threshold, see also \cite{Kai17} for $\frac{7}{3}<p<5,d=3$. The criterion is expressed in terms of the scale-invariant quantities $\left\|u_0\right\|_{L^2}\left\| \nabla u_0 \right\|_{L^2}$ and $M[u]E[u].$ Moreover, in  \cite{XuZhaozhengrevisedThresScattering2018} the authors revisited the proof of scattering using Dodson and Murphy's approach \cite{DodsonMurphy17}, \cite{DodsonMurphy18} and the dispersive estimate established in \cite{IvanoLebo17}. In \cite{Ou19}, we constructed a solitary wave solution for \Nls behaving asymptotically as a soliton on $\R^3$, as large time. This solution is global, does not scatter and prove the optimality of the threshold for scattering given above. \\

All results mentioned above for \Nls  concern global solutions but the existence of blow-up solutions is still an open question which is the purpose of this paper. \\ 

Before stating  our blow-up results for the \NNls equation, let us recall the proof of the classical blow-up criterion of Vlasov-Petrishev-Talanov \cite{Vlasov1970}, Zakharov \cite{Zakharov1972} and Glassey \cite{Glassey1977} which states that finite variance, negative energy solutions break down in finite time. This proof is a convexity argument on the variance $V(t)$ defined as the following, 
\begin{equation*}
   V(t):= V(u(t))= \int_{\R^d} |x|^2 |u(t,x)|^2 dx.
\end{equation*}
Assuming $V(0)< \infty$, the following virial identity holds:

\begin{equation*}
   \frac{1}{16}\frac{d^2}{dt^2}V(t)=E_{\footnotesize{\R^d }}(u)-\frac{1}{2}\left(\frac{d}{2}-\frac{d+2}{p+1} \right)\left\| u \right\|_{L^{p+1}\footnotesize{\left(\R^d\right)}}^{p+1},
   \end{equation*}
where $E_{\footnotesize{\R^d }}(u)=\frac{1}{2} \left\|\nabla u \right\|_{L^2\footnotesize{\left(\R^d\right)}}-\frac{1}{p+1}\left\| u \right\|_{L^{p+1}\footnotesize{\left(\R^d\right)}}^{p+1}.$ If $p>1+\frac{d}{4}$ and $E_{\footnotesize{\R^d }}(u) < 0 $ then $u$ blows up in finite time. As proved in \cite{HoRo07}, in the energy subcritical case ($d\leq 2$ or $p<1+\frac{4}{d-2}$), the assumption $E_{\R^d}(u)<0$ can be weakened to a condition on the initial data which can be formulated in term of the ground state (see Theorem \ref{theo threshold on d=2} below).\\ 

This proof does not adapt directly to the case of an exterior domain because the boundary term in the virial identity does not have a favorable sign, 
\begin{equation*}
  \frac{1}{16}  \frac{d^2}{dt^2} V(u(t))= E [u]- \frac{1}{2} \left( \frac{d}{2}  - \frac{d+2}{p+1} \right)\intOm |u|^{p+1} \, dx - \frac{1}{4 } \intb \left| \nabla u \right|^2 \, (x.\nn) \, d\sigma(x),
\end{equation*}
where $\nn$ is the unit outward normal vector. One can see that $x.\nn \leq 0 $ on $\partial \Omega.$ In this work, we will define a new shifted variance $\mathcal{V}(t)$ which will allow us to control the boundary term and to prove the existence of blow-up solution for the \NNls equation. In the energy subcritical case, with an additional symmetry assumption on the initial data, we will prove blow-up with the sufficient condition obtained in \cite{HoRo07} on the Euclidean space. \\


 

Next, we recall the needed local well-posedness property for the \NNls equation posed outside a convex obstacle.  
\begin{prop}
\label{well-posed}
Assume $p >1$ if $d = 2$ and $ 1 < p <\frac{d+2}{d-2}$ if $d\geq3.$  
Let $u_0 \in  H^1_0(\Omega)$ then there exists $ T >0$ and a unique solution $u(t)$ of \Nls equation with $u \in C([- T,T], H^1_0(\Omega))$\\ 
Assume $d=3$ and $p>2 $. Let $u_0 \in H^2\cap H^1_0(\Omega)$ then there exists $T>0$ and a unique solution~$u(t)$ of \Nls equation with $u \in C([- T,T], H^2 \cap H^1_0(\Omega)).$     
\end{prop}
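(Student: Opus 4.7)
The plan is to prove both statements by a standard contraction mapping argument, using the full (non-endpoint) Strichartz estimates for $e^{it\Delta_\Omega}$ established in \cite{Ivanovici10} and the references cited above the statement. Recall that for a Schr\"odinger-admissible pair $(q,r)$, i.e.\ $\frac{2}{q}+\frac{d}{r}=\frac{d}{2}$ with $(q,r,d)\neq(2,\infty,2)$, one has
\begin{equation*}
\|e^{it\Delta_\Omega} f\|_{L^q_tL^r_x}\lesssim \|f\|_{L^2(\Omega)}, \qquad \Big\| \int_{t_0}^t e^{i(t-s)\Delta_\Omega} F(s)\,ds\Big\|_{L^q_tL^r_x}\lesssim \|F\|_{L^{\tilde q'}_tL^{\tilde r'}_x},
\end{equation*}
and these estimates commute with $\sqrt{-\Delta_\Omega}$, hence pass to $H^1_0(\Omega)$ and to $H^2\cap H^1_0(\Omega)$.

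For the first statement, consider the Duhamel map
\begin{equation*}
\Phi(u)(t):=e^{i(t-t_0)\Delta_\Omega}u_0+i\int_{t_0}^t e^{i(t-s)\Delta_\Omega}\bigl(|u|^{p-1}u\bigr)(s)\,ds,
\end{equation*}
on the closed ball
\begin{equation*}
X_{T,R}:=\bigl\{u\in C([t_0-T,t_0+T];H^1_0(\Omega))\cap L^q([t_0-T,t_0+T];W^{1,r}_0(\Omega)):\|u\|_{X_T}\le R\bigr\},
\end{equation*}
for a well-chosen admissible $(q,r)$. The Sobolev embedding $H^1_0(\Omega)\hookrightarrow L^{\frac{2d}{d-2}}(\Omega)$ (for $d\ge 3$; on $d=2$ use $H^1_0\hookrightarrow L^s$ for every $s<\infty$) combined with H\"older in space and time lets one bound $\||u|^{p-1}u\|_{L^{\tilde q'}_tL^{\tilde r'}_x}$ by $T^\theta\|u\|_{X_T}^p$ for some $\theta>0$, precisely because $p<\frac{d+2}{d-2}$ (respectively $p<\infty$ if $d=2$). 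The Strichartz inequalities then show that $\Phi$ maps $X_{T,R}$ into itself and is a contraction for $R\sim \|u_0\|_{H^1_0}$ and $T$ small enough, producing the unique solution.

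For the second statement, bootstrap from the $H^1_0$-solution. Since $p>2$, the map $z\mapsto |z|^{p-1}z$ is of class $C^2$, and
\begin{equation*}
\Delta_\Omega(|u|^{p-1}u)=c_1|u|^{p-1}\Delta_\Omega u+c_2|u|^{p-3}u\,(\nabla u)^2+c_3|u|^{p-1}(\nabla u\cdot \overline{\nabla u})\,\frac{u}{|u|},
\end{equation*}
can be estimated in $L^2(\Omega)$ using H\"older together with the Sobolev embeddings $H^2\hookrightarrow L^\infty$ and $H^1\hookrightarrow L^6$ in $d=3$. Applying $\Delta_\Omega$ to the Duhamel formula — which is legitimate because $u_0\in H^1_0(\Omega)$ (so $\Delta_\Omega u_0$ makes sense as an element of $L^2$) and the nonlinear term vanishes on $\partial\Omega$ — and running the same fixed-point argument in a space that also controls $\Delta_\Omega u$ in $C_tL^2_x\cap L^q_tL^r_x$ yields the claimed $H^2\cap H^1_0$ solution.

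The main technical obstacle is that, in contrast to the Euclidean setting, the Strichartz estimates on $\Omega$ are delicate because of the interaction of waves with $\partial\Omega$, but for our purpose they are available as a black box from \cite{Ivanovici10} (and \cite{BuGeTz04a,AnRA08,PlVe09,OanaFabrice2010} in lower regimes). Once these are taken for granted, the only point that distinguishes the proof from the classical $\R^d$ case is checking that the required Dirichlet compatibility holds to justify commuting $\Delta_\Omega$ with the integral equation in the $H^2$ bootstrap.
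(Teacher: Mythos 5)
Your treatment of the $H^1_0(\Omega)$ statement is exactly the route the paper has in mind (Strichartz from \cite{Ivanovici10} as a black box plus a contraction in Strichartz spaces; the paper omits the details), but for the $H^2\cap H^1_0(\Omega)$ statement you take a genuinely different and heavier path than the paper. You propose to apply $\Delta_\Omega$ to the Duhamel formula and close a Strichartz-based fixed point (or bootstrap) at the $\Delta_\Omega u$ level, whereas the paper's argument deliberately avoids Strichartz altogether for this part: since $d=3$ and $H^2(\Omega)\hookrightarrow L^\infty(\Omega)$ with $H^2$ an algebra, the nonlinearity $z\mapsto|z|^{p-1}z$ (which is smooth enough for $p>2$) maps $H^2\cap H^1_0$ into itself locally Lipschitzly, so a plain contraction in $C([-T,T],H^2\cap H^1_0)$ (with the metric taken in $C_tL^2_x$) suffices, the propagator being an isometry on this space -- ``we don't have to control the nonlinearity growth but we just need regularity of the nonlinear term.'' Your route works but is more delicate: you must justify passing Strichartz estimates to $W^{1,r}_0$ norms (this requires equivalence of $\|(-\Delta_\Omega)^{1/2}\cdot\|_{L^r}$ and $\|\nabla\cdot\|_{L^r}$, i.e.\ Riesz-transform bounds on the exterior domain, valid only in a restricted range of $r$), you must also estimate $\nabla(|u|^{p-1}u)$, not just $|u|^{p-1}u$, in the dual Strichartz norm to close the $H^1$ contraction, and your parenthetical ``$u_0\in H^1_0(\Omega)$ so $\Delta_\Omega u_0$ makes sense as an element of $L^2$'' is a slip: for that you need the standing hypothesis $u_0\in H^2\cap H^1_0(\Omega)$ (for $u_0\in H^1_0$ alone, $\Delta_\Omega u_0$ is only in $H^{-1}$). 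With those points repaired your argument is sound; the paper's energy-space contraction simply buys a shorter proof that sidesteps boundary-related harmonic-analysis issues, at the cost of requiring $p>2$ and $d=3$, which are anyway assumed.
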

We omit the standard proof of Proposition \ref{well-posed}. The local existence and uniqueness in $H^1_0(\Omega)$ can be carried out by classical methods, using fixed point argument via Strichartz estimates. The proof is very similar to the one for the \nnls equation posed on the whole Euclidean space.
Moreover, the local existence of solutions for the \NNls equation in $H^2\cap H^1_0(\Omega)$ can be established using the fact that $H^2$ is an algebra and the following continuous embedding for any smooth domain $\Omega \subset \R^3$, $H^2(\Omega) \subset L^{\infty}(\Omega),$  see \cite[Proposition 2.1]{BuGeTz04a}. Thus we don't have to control the  nonlinearity growth but we just need regularity of the nonlinear term.  \\

It is classical that the solution $u$ can be extended to a maximal time interval $I= (-T_{-},T_{+})$ of existence. If $T_{+}=+\infty$ (respectively $T_{-}=-\infty $) then the solution is global for positive time (respectively for negative time) and if 
$T_{+}< \infty$ (respectively $|T_{-}|<\infty) $ then the solution blows up in finite time and 
$$\lim\limits_{t \to T_{+} } \left\|u(t, \cdot) \right\|_{H^1_0(\Omega)}=+\infty , \quad \text{  respectively } \lim\limits_{t \to  T_{-}} \left\|u(t, \cdot) \right\|_{H^1_0(\Omega)}=+\infty. $$

Let $\mathcal{H}(\Omega) \subset H^1_0(\Omega)$ be a space where a Cauchy theory for \Nls is available, in particular, from Proposition \ref{well-posed}, for $d=2$  one can consider $\mathcal{H}(\Omega)=H^1_0(\Omega)$ and for $d=3,$ and $p>2,$ it suffices to take $\mathcal{H}(\Omega)=H^2\cap H^1_0(\Omega).$ \\ 

\newpage

Now we state our main results.  
\begin{theo}
\label{theo outside spehre}
Assume $\Theta=B(0,R)$ and $p\geq 5.$ 
\begin{itemize}
    \item for $d=2,$ let $u_0 \in H^1_0(\Omega)$  such that $ E[u_0]+\frac{1}{8R^2}M[u_0]<0$ and $|x|u_0 \in L^2(\Omega),$  
    \item for $d \geq 3$, let $u_0 \in \mathcal{H}(\Omega) $ such that $\; E[u_0]<0$ and $|x|u_0 \in L^2(\Omega),$  
\end{itemize}
and let $u$ be the corresponding solution of {\rm{(NLS$_\Omega$)}}. Then the solution $u$ blows up in finite time. 
\end{theo}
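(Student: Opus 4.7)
The plan is to construct a modified variance
\[
\mathcal{V}(t) := \int_\Omega \psi(x) \, |u(t,x)|^2 \, dx, \qquad \psi(x) = (|x|-R)^2,
\]
and to show that it plays the role played by the usual $V(t)$ in the classical Glassey argument. The weight $\psi$ is non-negative, behaves like $|x|^2$ at infinity (so $\mathcal{V}(0) < \infty$ under the hypothesis $|x|u_0 \in L^2(\Omega)$), and, crucially, both $\psi$ and $\nabla\psi$ vanish on $\partial B(0,R)$; the last property kills the unfavorable boundary term that obstructs the classical variance identity recalled in the introduction.

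Next I would derive, for sufficiently regular solutions,
\begin{align*}
\frac{d^2}{dt^2}\mathcal{V}(t) = &\ 4 \int_\Omega \nabla^2\psi(\nabla u, \overline{\nabla u}) \, dx - \int_\Omega \Delta^2\psi \, |u|^2 \, dx \\
&\ - \frac{2(p-1)}{p+1}\int_\Omega \Delta\psi \, |u|^{p+1} \, dx,
\end{align*}
with no boundary contribution, since $\nabla\psi$ vanishes on $\partial\Omega$. This identity is rigorous at the $H^2 \cap H^1_0$ level available in dimension $d \geq 3$ by Proposition \ref{well-posed}; for $d = 2$ one argues by regularization and passage to the limit. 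A direct computation shows
\[
\nabla^2\psi(\nabla u, \overline{\nabla u}) \leq 2|\nabla u|^2, \qquad \Delta\psi = 2d - \frac{2(d-1)R}{|x|}, \qquad \Delta^2 \psi = \frac{2R(d-1)(d-3)}{|x|^3}.
\]
Bounding the Hessian term from above by $8\int_\Omega|\nabla u|^2\, dx$, rewriting $\int|\nabla u|^2 = 2E[u_0] + \frac{2}{p+1}\int|u|^{p+1}$ by energy conservation, and exploiting $R/|x| \leq 1$ in $\Omega$, the coefficient of $\int_\Omega|u|^{p+1}$ collapses to $\frac{4(5-p)}{p+1}$. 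This is non-positive precisely when $p \geq 5$, which is the assumed threshold; after discarding this term one arrives at
\[
\mathcal{V}''(t) \leq 16 \, E[u_0] - \int_\Omega \Delta^2\psi \, |u|^2\, dx.
\]
For $d \geq 3$ we have $\Delta^2\psi \geq 0$ and hence $\mathcal{V}''(t) \leq 16 E[u_0] < 0$. For $d = 2$, $\Delta^2\psi = -2R/|x|^3$ and the bound $|x| \geq R$ yields $-\int\Delta^2\psi |u|^2 \leq 2 M[u_0]/R^2$, producing $\mathcal{V}''(t) \leq 16 \bigl(E[u_0] + \tfrac{1}{8R^2}M[u_0]\bigr) < 0$, which matches the hypothesis of the theorem verbatim.

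Once the strict concavity $\mathcal{V}''(t) \leq -\delta < 0$ holds uniformly in time, integrating twice forces $\mathcal{V}(t)$ to become negative in finite time, contradicting $\mathcal{V}(t) \geq 0$; the maximal interval of existence is therefore finite. The main obstacles I expect are (i) rigorously justifying the virial identity at the available regularity—straightforward in $d \geq 3$ where $u \in H^2 \cap H^1_0$, but requiring a mollification argument in $d = 2$—and (ii) propagating the finiteness of $\mathcal{V}(t)$ along the flow, which should follow from a Gronwall-type estimate on $|\mathcal{V}'(t)| = |\int \nabla\psi \cdot 2\,\mathrm{Im}(\bar u \nabla u) \, dx|$. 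The threshold $p \geq 5$ and the precise coefficient $\frac{1}{8R^2}$ in the 2D hypothesis arise naturally from the algebra of the particular weight $\psi(x) = (|x|-R)^2$, suggesting that the statement is sharp for this specific method.
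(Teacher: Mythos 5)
Your proposal is correct and is essentially the paper's own argument: the paper's modified variance uses the weight $|x|^2-2R|x|+10=(|x|-R)^2+(10-R^2)$, which differs from your $\psi(x)=(|x|-R)^2$ only by a constant multiple of the conserved mass, so the cancellation of the boundary term on $|x|=R$, the threshold $p\geq 5$, and the $d=2$ correction $E[u_0]+\tfrac{1}{8R^2}M[u_0]<0$ all arise exactly as in the paper. The only difference is organizational: you invoke the general-weight virial identity directly, while the paper assembles the same identity from separate virial/Pohozaev computations for $\int |x|^2|u|^2$ and $\int |x||u|^2$, and likewise handles the $d=2$ regularity issue by approximation.
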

Next, we extend the above result to any smooth, compact and convex obstacle $\Theta,$ such that the following holds in dimension $d\geq 2:$ Let  $M:= \displaystyle  \max_{x \in \partial \Theta} \left(|x|\right)$ and $m:= \displaystyle \min_{x\in \partial \Theta}\left( |x|  \right),$ then 
\begin{equation} 
\label{obs-type}
\frac{M}{m} < \frac{d}{d-1}.
 \end{equation}
 
\begin{theo}
\label{theo-convex}
Assume that $\Theta$ satisfies \eqref{obs-type} and $p\geq 1+\frac{4}{d-\frac{M}{m}(d-1)}. $
\begin{itemize}
\item for $d=2,$ let $u_0 \in H^1_0(\Omega)$ such that $E[u_0]+\frac{M}{8 m^3} M[u_0]<0 $ and $|x|u_0 \in L^2(\Omega),$ 
\item for $d\geq 3,$ let $u_0 \in  \mathcal{H}(\Omega)$ such that $E[u_0]<0$ and 
$|x| u_0 \in L^2(\Omega),$
\end{itemize}
and let $u$ be the corresponding solution of {\rm{(NLS$_\Omega$)}}. Then the solution $u$ blows up in finite time. 
\end{theo}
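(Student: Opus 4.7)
The plan is to adapt the modified-variance argument used in the previous theorem (ball case) to a general convex obstacle, via a weight $\phi_\Omega:\Omega\to\mathbb{R}_+$ tailored to the geometry of $\Theta$. I would set
\[
\mathcal{V}(t) = \int_\Omega \phi_\Omega(x)\,|u(t,x)|^2\,dx
\]
and impose four requirements on $\phi_\Omega$: (a) $\phi_\Omega \geq 0$ so that $\mathcal{V}(t)\geq 0$; (b) $\nabla^2\phi_\Omega \geq 2I$ in the bulk so that the interior virial terms recover the Euclidean ones; (c) $\partial_\nn \phi_\Omega \geq 0$ on $\partial\Omega$, which reverses the sign of the obstructive boundary contribution; and (d) $|\Delta^2\phi_\Omega|$ uniformly bounded by a multiple of $M/m^3$, which will explain the mass correction $\frac{M}{8m^3}M[u_0]$ appearing in the $d=2$ hypothesis. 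Far from $\Theta$, $\phi_\Omega$ should coincide with $|x|^2$, and the nontrivial work is in the transition layer near $\partial\Theta$.

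I would then establish the generalized virial identity
\begin{equation*}
\mathcal{V}''(t) = 4\int_\Omega \nabla^2\phi_\Omega(\nabla u,\overline{\nabla u})\,dx - \int_\Omega \Delta^2\phi_\Omega\,|u|^2\,dx - \frac{2(p-1)}{p+1}\int_\Omega \Delta\phi_\Omega\,|u|^{p+1}\,dx - 2\int_{\partial\Omega} \partial_\nn\phi_\Omega\,|\partial_\nn u|^2\,d\sigma,
\end{equation*}
by integrating by parts and exploiting $u|_{\partial\Omega}=0$ to eliminate tangential gradients. Using the conserved energy $E[u_0]=\tfrac12\int|\nabla u|^2-\tfrac{1}{p+1}\int|u|^{p+1}$, the interior terms rewrite as $16\,E[u_0]-c_p\int|u|^{p+1}+\text{correction}$, and under property (b) the coefficient $c_p$ is nonnegative precisely when $p\geq 1+\tfrac{4}{d-(M/m)(d-1)}$: this is the origin of the $p$ threshold in the hypothesis. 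Property (c) renders the boundary term non-positive, removing the obstruction that breaks the classical Glassey argument, while property (d) bounds the bi-Laplacian correction by $\tfrac{2M}{m^3}M[u_0]$.

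Combining these estimates, I would obtain $\mathcal{V}''(t)\leq 16\bigl(E[u_0]+\tfrac{M}{8m^3}M[u_0]\bigr)<0$ in $d=2$ and $\mathcal{V}''(t)\leq 16\,E[u_0]<0$ in $d\geq 3$, uniformly on the maximal existence interval. Together with $\mathcal{V}(t)\geq 0$, Glassey's concavity–positivity contradiction forces $u$ to break down in finite time. The main obstacle is the explicit construction of $\phi_\Omega$: requirements (b)–(d) are genuinely in tension as soon as $\Theta$ departs from a sphere, and $M/m<\tfrac{d}{d-1}$ is precisely the quantitative threshold that guarantees a compatible $\phi_\Omega$ exists. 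The sphere case $M=m$ corresponds to the simplest such construction (already carried out in the previous theorem), and the general case should be obtained perturbatively from it, with the transition-layer parameters dictated by the ratio $M/m$.
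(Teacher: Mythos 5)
Your overall strategy is the right one (a modified variance with a weight adapted to the obstacle, convexity giving the sign of the boundary term, the $p$-threshold and the $d=2$ mass correction coming from derivatives of the weight), but as written the proposal has a genuine gap: the weight $\phi_\Omega$, which carries the entire content of the theorem, is never constructed. You explicitly defer its existence to an unproved ``perturbative'' argument from the ball case and even assert that the conditions are in tension once $\Theta$ is not a sphere. The point of the paper's proof is that no such perturbation is needed: the \emph{same explicit radial} weight works for every convex obstacle with $0\in\Theta$, namely $\phi(x)=|x|^2-2M|x|+\mathrm{const}$ with $M=\max_{\partial\Theta}|x|$ (i.e. the combination $\Upsilon_2-2M\Upsilon_1+\mathrm{const}\,M[u]$ of the two virial quantities of Proposition \ref{Propo 1&2 deri of Up1 & Up2}, which rest on the Pohozaev identities of Proposition \ref{poho inde}). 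The geometry enters only through three elementary facts: $x\cdot\nn\le 0$ on $\partial\Omega$ (convexity, $0\in\Theta$), $|x|\le M$ on $\partial\Omega$, so the boundary contribution is $\tfrac14\int_{\partial\Omega}|\nabla u|^2 (x\cdot\nn)\bigl(\tfrac{M}{|x|}-1\bigr)\,d\sigma\le 0$, and $|x|\ge m$ in $\Omega$, which makes the coefficient of $\int_\Omega|u|^{p+1}$ equal to $-\tfrac{(p-1)(d-\frac{M}{m}(d-1))-4}{4(p+1)}\le 0$ exactly under the hypotheses $M/m<d/(d-1)$ and $p\ge 1+\tfrac{4}{d-\frac{M}{m}(d-1)}$; for $d=2$ the leftover term $\tfrac{M}{8}\int_\Omega|u|^2|x|^{-3}\le\tfrac{M}{8m^3}M[u]$ is absorbed by the strengthened negativity assumption. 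Without producing a $\phi_\Omega$ satisfying your (a)--(d) and verifying these coefficient computations, your argument does not yet prove the statement.

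A second, more technical problem: your condition (b) is stated in the wrong direction. To conclude $\mathcal{V}''\le 16E[u]+\text{corrections}$ you need the kinetic contribution $4\int\nabla^2\phi(\nabla u,\overline{\nabla u})$ to be bounded \emph{above} by $8\|\nabla u\|_{L^2}^2$, i.e. $\nabla^2\phi\le 2I$; requiring $\nabla^2\phi\ge 2I$ gives a lower bound and the Glassey argument collapses. The paper's weight indeed satisfies $\nabla^2\phi\le 2I$, since the Hessian of $-2M|x|$ is $-2M|x|^{-1}$ times the tangential projection (this is the origin of the favorable term $-\tfrac M2\int|\nablaT u|^2|x|^{-1}$). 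Correspondingly, the $p$-threshold does not come from the Hessian bound but from the Laplacian deficit: one needs $\Delta\phi=2d-2M(d-1)/|x|\ge 8/(p-1)$ on $\Omega$, which with $|x|\ge m$ is exactly $p\ge 1+\tfrac{4}{d-\frac Mm(d-1)}$; and the $d=2$ correction $\tfrac{M}{8m^3}M[u_0]$ comes from $\Delta^2(-2M|x|)=2M(d-1)(d-3)|x|^{-3}$, which is positive only when $d=2$. With these corrections and the explicit radial weight, your scheme becomes the paper's proof; as it stands, the key construction and the sign conditions it must satisfy are missing or misstated.
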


\begin{rem}
Let us mention that, if $\Theta$ and $p$ satisfy the assumptions of the Theorems  and $u_0\in \mathcal{H}(\Omega)\setminus\{0\}$, then the solution with initial data $\lambda u_0$ blows up in finite time for large $\lambda$. 
\end{rem}

In the following result, we consider $\Theta$ to be a smooth, compact and convex obstacle which is invariant with respect to the transformations $x_j\longmapsto -x_j ,$ for $j=1, \ldots,d,$ that is,
 if $ x=(x_j)_{1\leq j \leq d}\in \Theta,$ then $(x_1,\cdot\cdot\cdot, -x_j, ,\cdot\cdot\cdot , x_d)  \in \Theta  .$ For example, $\Theta$ might a ball or the volume delimited by an ellipsoid. We define $\mathcal{S}_d$ the set of initial data $u_0 \in \mathcal{H},$ which satisfy the additional symmetry conditions:
$$ \mathcal{S}_d := \{ u_0 \in  \mathcal{H}(\Omega)   \backslash     \; u_0(x_1,..,-x_i,..,x_d)=-u_0(x_1,..,x_i,..,x_d), \;  i=1, \ldots ,d  \} . $$ 
By uniqueness for the Cauchy problem, the symmetry properties of $u_0 \in \mathcal{S}_d$ are conserved, that is, 
 $$u(t, x_1,..,-x_i,..,x_d)=-u(t,x_1,..,x_i,..,x_d), \;  i=1,\ldots,d.$$ 
\begin{theo}
\label{theo sym}
Assume $d\geq 2$, $p \geq 1+\frac{4}{d} $ and $\Theta$ is invariant under the above symmetry.
Let $u_0 \in \mathcal{S}_d$ and let $u$ be the corresponding solution of \Nls with maximal time interval $I$ of existence. If $E[u]<0 $ and $|x|u_0 \in L^2(\Omega),$ then the length of $I$ is finite and thus the solution $u$ blows up in finite time.
\end{theo}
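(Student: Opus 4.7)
The plan is to adapt Glassey's classical convexity argument to \Nls by introducing a modified variance $\mathcal{V}$ that, for symmetric solutions, is both non-negative and strictly concave in time. The reflection symmetries of $u\in\mathcal{S}_d$ will take the place of the geometric restrictions on $\Theta$ imposed in Theorems~\ref{theo outside spehre}--\ref{theo-convex}.

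Starting from the virial identity for the standard variance $V(t)=\int_\Omega |x|^2|u(t,x)|^2\,dx$ recalled in the introduction, the obstruction is the boundary term $-\tfrac14\int_{\partial\Omega}|\nabla u|^2(x\cdot\vec n)\,d\sigma$, which is non-negative because $\Theta$ is convex and contains the origin (a consequence of its reflection invariance), so $x\cdot\vec n\le 0$ on $\partial\Theta$. To bypass this I would replace $|x|^2$ by a weight $\psi\colon\Omega\to[0,\infty)$ that (i) is comparable to $|x|^2$ at infinity so that $\mathcal V(t):=\int_\Omega\psi|u|^2\,dx$ is finite and non-negative under $|x|u_0\in L^2(\Omega)$, (ii) satisfies $\partial_{\vec n}\psi\ge 0$ on $\partial\Theta$ so that the boundary contribution in the weighted virial identity has the right sign, and (iii) is invariant under each reflection $x_j\mapsto -x_j$, which is possible because $\Theta$ has this invariance. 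A natural ansatz is $\psi=|x|^2+\phi$ where $\phi$ is a symmetric, bounded, harmonic function on $\Omega$ satisfying $\partial_{\vec n}\phi=-2x\cdot\vec n$ on $\partial\Theta$ and decaying at infinity; this choice guarantees $\Delta\psi=2d$, $\Delta^2\psi=0$, and $\partial_{\vec n}\psi=0$ on $\partial\Theta$.

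The weighted virial identity would then reduce to
\[\tfrac1{16}\mathcal V''(t)=E[u_0]-\tfrac12\Big(\tfrac d2-\tfrac{d+2}{p+1}\Big)\int_\Omega|u|^{p+1}\,dx+\tfrac14\int_\Omega\nabla^2\phi(\nabla u,\overline{\nabla u})\,dx,\]
and the first two terms are $\le E[u_0]<0$ under $p\ge 1+\tfrac4d$. The role of the symmetry $\mathcal S_d$ enters through the control of the Hessian remainder: the entries of $\nabla^2\phi$ have definite parities under the reflections (even on the diagonal, odd in the two relevant variables off the diagonal), while the partials $\partial_i u$ have matching parities because $u(t,\cdot)\in\mathcal S_d$ for all $t$ by uniqueness. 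Combining these parities with integration by parts and the harmonicity of $\phi$ should let the remainder be either annulled or dominated by the $L^{p+1}$ term, where the mass-supercritical assumption $p\ge 1+\tfrac 4d$ leaves a strictly negative margin. The outcome is $\mathcal V''(t)\le 16\,E[u_0]<0$ on the maximal interval of existence.

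Once this bound is established, the proof concludes by the standard concavity-breakdown argument: integrating twice yields $\mathcal V(t)\le\mathcal V(0)+t\mathcal V'(0)+8E[u_0]t^2$, a parabola opening downward that becomes negative in finite time, contradicting $\mathcal V(t)\ge 0$; hence the maximal interval must have finite length. The main obstacle is the Hessian-remainder estimate described in the previous paragraph: designing a weight $\psi$ for which the remainder can be controlled by the reflection symmetries alone, with no geometric smallness assumption on $\Theta$, is the heart of the argument. The exact $\psi$ in the author's proof may need to be more delicate than the harmonic correction sketched above, particularly in dimension $d=2$, where the exterior Neumann problem for $\phi$ presents a compatibility obstruction.
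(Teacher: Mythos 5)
Your reduction of the problem to designing a weight $\psi$ with favorable boundary sign is reasonable, but the specific ansatz $\psi=|x|^2+\phi$ with $\phi$ bounded and harmonic leaves a genuine gap at exactly the point you identify as the heart of the argument, and that gap does not close. Since $\phi$ is harmonic and nonconstant, its Hessian is trace-free, hence pointwise indefinite, so the remainder $\int_\Omega \nabla^2\phi(\nabla u,\overline{\nabla u})\,dx$ is a signless quadratic form in $\nabla u$ supported (up to decay) near the obstacle. The symmetries do not annul it: if $u\in\mathcal{S}_d$ then $\partial_i u$ is even in $x_i$ and odd in $x_k$ for $k\neq i$, while $\partial_{ij}\phi$ is odd in $x_i$ and $x_j$ for $i\neq j$ and even otherwise, so every summand $\partial_{ij}\phi\,\re(\partial_i u\,\partial_j\bar u)$ is \emph{even} under each reflection; nothing cancels by parity. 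Nor can it be dominated by the negative margin $-c\int_\Omega|u|^{p+1}$: Gagliardo--Nirenberg bounds $\|u\|_{L^{p+1}}$ from above by the gradient, not from below, and there is no a priori control of $\|\nabla u\|_{L^2}$ near $\partial\Theta$ along a solution that is about to blow up. Finally, as you note, in $d=2$ the exterior Neumann problem for a decaying harmonic $\phi$ carries a compatibility obstruction, so even the construction of $\psi$ is problematic in the lowest dimension covered by the theorem.

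The paper avoids an interior remainder altogether by taking a piecewise-linear correction rather than a harmonic one: ${\rm V}(u(t))=\int_\Omega\bigl(|x|^2-C\sum_{i=1}^d|x_i|+C^2\bigr)|u|^2\,dx$. This is where the symmetry is really used: since $u\in\mathcal{S}_d$ vanishes on the coordinate hyperplanes, $u$ solves a Dirichlet problem on each quadrant/octant, the singular set of $|x_i|$ contributes nothing, and the virial computation for $\Gamma_i(t)=\int_\Omega|x_i|\,|u|^2\,dx$ yields a \emph{pure boundary} second derivative, $\frac{d^2}{dt^2}\Gamma_i=8\int_{\partial\Omega^{++}}|\nabla u|^2|n_i|\,d\sigma$, with favorable sign by convexity (Proposition 5.1). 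Adding these to the standard virial identity and choosing $C\geq 2\max_{x\in\partial\Omega}\bigl(|x\cdot\nn|\,(\sum_i|n_i|)^{-1}\bigr)$ makes the boundary contribution nonpositive, giving $\frac{d^2}{dt^2}{\rm V}\leq 16E[u]<0$ for $p\geq 1+\frac4d$, after which your concluding concavity argument (with ${\rm V}\geq 0$) is exactly the one used. If you want to salvage your scheme, you would need a correction whose Hessian vanishes (or is nonpositive) in the interior, which is precisely what the $-C|x_i|$ terms achieve and what a bounded harmonic $\phi$ cannot.
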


\begin{rem}
Theorems \ref{theo outside spehre} and \ref{theo sym} remain true for the \NNls equation outside an obstacle centered at any point $x_0 .$ For Theorem \ref{theo sym}, one would have to use a symmetry around $x_0$ instead of the origin. Moreover, we conjecture that the Theorems are still valid with weaker assumption $p>1+\frac 4d.$ \\ 
\end{rem}
Now we introduce the concept of ground state. Let $Q$ be the solution of the following nonlinear elliptic equation 
\begin{equation}
\label{ellip-eq}
    -Q+\Delta Q + |Q|^{p-1}Q=0, \quad Q=Q(x), \quad x \in \R^d.
\end{equation}

For $1<p<\frac{d+2}{d-2}$, this nonlinear equation has infinite number of solutions in $H^1(\R^d)$. Among these there is exactly one solution which is positive and radial, called the ground state solution. It is the unique minimal mass solution up to space translation and phase shift and exponentially decaying, see~\cite{Kwong89}. We henceforth denote by $Q$ this ground state solution.
\begin{theo}
\label{theo threshold on d=2}
Assume that $\Theta$ is invariant by the symmetry defined above and $ s_c= \frac{d}{2} - \frac{2}{p-1}.$ 
Let $u_0$ be such that 
\begin{itemize}
    \item for $d=2,\, u_0\in \mathcal{S}_2$
   and $s_c > 0,$ i.e, $p > 3$. %
    \item for $d\geq3,\,u_0 \in \mathcal{S}_d $ and $0 < s_c <1, $ i.e, $1 + \frac 4d < p <\frac{d+2}{d-2}$. 
\end{itemize}
and let $u$ be the corresponding solution of \Nls with maximal time interval of existence $I.$ Suppose that
 \begin{equation}
\label{Euu<EQ d=2}
   M[u_0]^{\frac{1-s_c}{s_c}} E[u_0] <  M[Q]^{\frac{1-s_c}{s_c}} E[Q].
\end{equation}
If \eqref{Euu<EQ d=2} holds and \begin{equation}
\label{Duu0>DQ d=2}
 \left\| u_0 \right\|_{L^2(\Omega)}^{1-s_c}   \left\| \nabla u_0 \right\|_{L^2(\Omega)}^{s_c} > \left\| Q  \right\|_{L^2{\footnotesize{\left(\R^2 \right)}}}^{1-s_c} \left\| \nabla Q \right\|_{L^2{\footnotesize{\left(\R^2 \right)}}}^{s_c}.
\end{equation}
Then for $t\in I $, 
\begin{equation}
\label{Duu(t)>D(Q) d=2}
 \left\| u_0 \right\|_{L^2(\Omega)}^{1-s_c}   \left\| \nabla u(t) \right\|_{L^2(\Omega) }^{s_c}> \left\| Q  \right\|_{L^2\footnotesize{\left(\R^2 \right)}}^{1-s_c} \left\| \nabla Q \right\|_{L^2\footnotesize{\left(\R^2 \right)}}^{s_c}.
\end{equation}
Furthermore, if $|x|u_0 \in L^2(\Omega)$ then the length of $I$ is finite and thus the solution blows up in finite time.

\end{theo}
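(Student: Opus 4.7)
The plan is to follow the Holmer--Roudenko strategy of \cite{HoRo07}, adapted to the exterior domain by combining (i) a continuity argument showing that \eqref{Duu0>DQ d=2} propagates along the flow, and (ii) the modified variance $\VV(t)$ machinery already employed in the proof of Theorem \ref{theo sym}, whose symmetry-adapted construction is precisely what absorbs the unfavorable boundary contribution $-\tfrac14\intb|\nabla u|^2(x\cdot\nn)\,d\sigma$ in the standard virial identity.

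For step (i), first extend $u(t,\cdot)\in H^1_0(\Omega)$ by zero to $\R^d$, which preserves the $L^2$ and $\dot H^1$ norms. Combining the sharp Gagliardo--Nirenberg inequality on $\R^d$ with conservation of mass and energy yields
\begin{equation*}
M[u_0]^{\frac{1-s_c}{s_c}}E[u_0]\ \ge\ g(\phi(t)),\qquad \phi(t):=\|u_0\|_{L^2(\Omega)}^{1-s_c}\|\nabla u(t)\|_{L^2(\Omega)}^{s_c},
\end{equation*}
where $g$ is an explicit one-variable function attaining its strict maximum at $\phi_Q:=\|Q\|_{L^2(\R^d)}^{1-s_c}\|\nabla Q\|_{L^2(\R^d)}^{s_c}$ with $g(\phi_Q)=M[Q]^{(1-s_c)/s_c}E[Q]$. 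Hypothesis \eqref{Euu<EQ d=2} therefore produces a uniform gap $g(\phi(t))\le g(\phi_Q)-\eta_0$; together with \eqref{Duu0>DQ d=2} and the continuity of $\phi$ on $I$, this forbids $\phi(t)$ from ever reaching $\phi_Q$, proving \eqref{Duu(t)>D(Q) d=2} and moreover a quantitative lower bound $\phi(t)\ge(1+\delta)\phi_Q$ uniform in $t\in I$.

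For step (ii), reinjecting the strict gap $\phi(t)\ge(1+\delta)\phi_Q$ into the sharp Gagliardo--Nirenberg inequality and exploiting the scale structure of $g$ (valid because $s_c>0$ in $d=2$, and $0<s_c<1$ in $d\ge 3$) upgrades the previous bound into a coercivity estimate
\begin{equation*}
E[u_0]\ -\ \tfrac12\Bigl(\tfrac{d}{2}-\tfrac{d+2}{p+1}\Bigr)\|u(t)\|_{L^{p+1}(\Omega)}^{p+1}\ \le\ -\kappa\,\|\nabla u(t)\|_{L^2(\Omega)}^2,
\end{equation*}
valid for some $\kappa>0$ uniform in $t\in I$. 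Since $u_0\in\mathcal{S}_d$ and the antisymmetry is preserved by the flow, $u(t,\cdot)$ vanishes on every coordinate hyperplane; this is the structural property exploited in the proof of Theorem \ref{theo sym} to construct a bounded-below modified variance $\VV(t)$ whose boundary contribution on $\partial\Theta$ has already been absorbed. Plugging the coercivity bound into the identity for $\ddot\VV(t)$ gives $\ddot\VV(t)\le -C<0$ uniformly on $I$, which forces $|I|<\infty$.

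The step I expect to be the main obstacle is gluing (ii) cleanly with the modified variance: the coercivity is phrased in terms of the unweighted kinetic energy $\|\nabla u\|_{L^2(\Omega)}^2$, while the natural second derivative of $\VV$ produces a weighted version. One must therefore either check that the variance of Theorem \ref{theo sym} indeed upper bounds $\ddot\VV$ by $16\bigl(E[u]-\tfrac12(\tfrac d2-\tfrac{d+2}{p+1})\|u\|_{L^{p+1}(\Omega)}^{p+1}\bigr)$ plus strictly lower-order terms dominated by $\|\nabla u\|_{L^2(\Omega)}^2$, or introduce a further minor modification adapted to the Holmer--Roudenko threshold. Once this compatibility is confirmed, the remainder reduces to the Euclidean argument transferred to $\R^d$ via zero-extension.
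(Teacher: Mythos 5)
Your proposal is correct and follows essentially the same route as the paper: propagate the gradient lower bound via zero-extension, the sharp Gagliardo--Nirenberg inequality and continuity in time (with the quantitative $\delta_1,\delta_2$ refinement), then feed the resulting coercivity into the symmetric modified variance of Theorem \ref{theo sym} with $C$ chosen large enough that the boundary term is nonpositive. The compatibility issue you flag does not actually arise: by Proposition \ref{deriv-dt-1&2-Gamma 1&2} the corrections $\Gamma_i$ contribute only boundary terms to the second derivative, so \eqref{dt^2 V on 2D} and \eqref{dt^2 V on any D} yield exactly $16\bigl(E[u]-\tfrac12\bigl(\tfrac d2-\tfrac{d+2}{p+1}\bigr)\|u\|_{L^{p+1}(\Omega)}^{p+1}\bigr)$ plus a boundary term, with no weighted kinetic energy appearing.
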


Let us mention that in the $L^2$-critical case we can find an almost explicit blow-up solution for the \NNls equation using pseudo-conformal transformation. In this case, we can construct 
a blow-up solution for the \NNls equation by adapting the argument of N.\,Burq, P.G{\'e}rard and N.\,Tzvetkov in \cite{BuGeTz03}, for the \nnls equation inside a domain. \\ 

Indeed, assume $p=1+\frac 4d.$ Let $\Psi $ be a $\mathcal{C}^{\infty}$-function such that  $\Psi=0$ near $\Theta$ and  $\Psi=1$ for $|x|>>1$ and let $Q$ be any solution of the nonlinear elliptic equation \eqref{ellip-eq}, (it does not have to be the ground state) then there exists $ T>0 $ and a smooth function $r(t,x)$ defined on $[0,T)\times \Omega$ and exponentially decaying as $t \to T$ such that  
$$u (t,x):=\frac{1}{(T-t)} Q\left(\frac{x-x_0}{T-t}\right) \Psi(x) e^{i(\frac{4-|x-x_0|^2}{4(T-t)})}+r(t,x) $$
 is solution for \Nls satisfying the Dirichlet boundary conditions, which blow-up in finite time~$T$. The proof is similar to one given in \cite{BuGeTz03} for \nls equation inside a domain in $\R^2.$ We need to construct the smooth correction $r$ such that $u$ is a solution of \Nls satisfying Dirichlet boundary conditions. To achieve this, one define a contraction mapping using the Duhamel formula on a closed ball in the Banach space $(E, \left\| \cdot \right\|_E)$ defined by   $$E:=\{f\in \mathcal{C}([0,T),H^2(\Omega) \cap H^1_0(\Omega)); \left\| f \right\|_{E} < \infty  \},$$ equipped with the norm $$ \left\|f\right\|_E:=\sup_{t\in [0,T)} \{  e^{\frac{1}{2(T-t)}} \left\| f \right\|_{L^2(\Omega)}+ e^{ \frac{1}{3(T-t)}} \left\| f \right\|_{H^2(\Omega)} \}. $$ 
The existence of the smooth correction $r$ follows from the fixed point theorem.\\

The paper is organized as follows. In section \ref{sectionPropertiesoftheGroundState}, we give a review of some properties related to the ground state $Q$. In section \ref{sectionPohozaev&MorawetzIdentities}, we prove Pohozaev's identities outside an obstacle. In section~\ref{sectionBlow-upAnyDimension}, we prove the existence of blow-up solution in the exterior of a ball for $p\geq 5$ and outside a convex obstacle that satisfies \eqref{obs-type} for $p\geq 1+\frac{4}{d-\frac{M}{m}(d-1)} $, using a convexity argument on the modified variance. In section \ref{sectionBlow-upSymmetricSolution2D}, we study the existence of symmetric blow-up solution for $p \geq 1+ \frac 4d$ using a different variance. Finally, in section \ref{sectionThresholdBlow-upSolution}, we study the behavior of the solutions, in particular, the blow-up criteria for the solutions with initial data beyond the ground state threshold. 

\section*{Acknowledgements}
O.L would like to thank Thomas Duyckaerts (LAGA) and Svetlana Roudenko (FIU) for their valuable comments and suggestions which helped improve the manuscript. Part of this work was done while the author was visiting Department of Mathematics and Statistics at Florida International University, Miami,US. 
\section{Properties of the Ground State}
\label{sectionPropertiesoftheGroundState}
Weinstein \cite{Weinstein82} proved that the sharp constant $C_{GN}$ in the Gagliardo-Nirenberg estimate 
\begin{equation}
\label{Nirem}
    \left\| f \right\|_{L^{p+1}}^{p+1} \leq C_{GN}   \left\| \nabla f \right\|_{L^2}^{\frac{d(p-1)}{2}} 
    \left\| f \right\|_{L^2}^{2-\frac{(d-2)(p-1)}{2}} 
\end{equation}

is attained at the function $Q$ (the ground state described in the introduction), i.e  
$$C_{GN}= \frac{\left\| Q\right\|_{L^{p+1} \footnotesize{\left(\R^d \right)}}^{p+1}} 
{ \left\| \nabla Q \right\|_{L^2 \footnotesize{\left(\R^d \right)}}^{ \frac{d(p-1)}{2}} \left\| Q \right\|_{L^2 \footnotesize{ \left(\R^d \right)}}^{2-\frac{(d-2)(p-1)}{2}} } \; .$$
Multiplying \eqref{ellip-eq} by $Q$ and integrating by parts, we obtain
\begin{equation}
\label{ 1 iden de Q}
    - \left\| Q \right\|_{L^2\footnotesize{\left(\R^d \right)}}^2 - \left\| \nabla Q \right\|_{L^2 \footnotesize{\left(\R^d \right)}} + \left\| Q \right\|_{L^{p+1}\footnotesize{\left(\R^d \right)}}^{p+1}=0.
\end{equation}
Multiplying \eqref{ellip-eq} by $x.\nabla Q$ and integrating by parts, we obtain the following identity 
\begin{equation}
\label{ 2 iden de Q}
\frac{d}{2} \left\| Q \right\|_{L^2\footnotesize{\left(\R^d \right)}}^2 + \frac{d-2}{2} \left\| \nabla Q \right\|_{L^2\footnotesize{\left(\R^d \right)}}^2 - \frac{d}{p+1} \left\| Q \right\|_{L^{p+1}\footnotesize{\left(\R^d \right)}}^{p+1} =0.
\end{equation}
These two identities \eqref{ 1 iden de Q} and  \eqref{ 2 iden de Q} enable us to obtain these relations
\begin{align*}
    \left\| \nabla Q \right\|_{L^2\footnotesize{\left(\R^d \right)}}^2 &= \frac{d(p-1)}{(d+2)-p(d-2)} \left\| Q \right\|_{L^2\footnotesize{\left(\R^d \right)}}^2   
    \\ \left\| Q \right\|_{L^{p+1}\footnotesize{\left(\R^d \right)}}^{p+1} &=\frac{2(p+1)}{d(p-1)} \left\| \nabla Q \right\|_{L^2\footnotesize{\left(\R^d \right)}}^2 
\end{align*}
and thus, reexpress 
\begin{equation}
\label{c_GN}
C_{GN}= 
\left( \frac{2(p+1)}{d(p-1)}  \left\| \nabla Q \right\|_{L^2\footnotesize{\left(\R^d \right)}}
\left\| Q \right\|_{L^2\footnotesize{\left(\R^d \right)}}^{\frac{4-(d-2)(p-1)}{d(p-1)-4}}\right)^{-\frac{d(p-1)-4}{2}}
\end{equation}
We also compute
\begin{equation}
\label{energyQ}
    E[Q]:=\frac{1}{2} \left\| \nabla Q \right\|_{L^2\footnotesize{\left(\R^d \right)}}^2- \frac{1}{p+1} \left\| Q \right\|_{L^{p+1}\footnotesize{\left(\R^d \right)}}^{p+1}= \frac{d(p-1)-4}{2d(p-1)} \left\| \nabla Q \right\|_{L^2\footnotesize{\left(\R^d \right)}}^2.
\end{equation}
\section{Pohozaev's identities outside obstacle}
\label{sectionPohozaev&MorawetzIdentities}
This section is devoted to the proof of the Pohozaev's Identity in exterior domain. In the following Proposition $\Omega$ can be the exterior of any regular obstacle. 
\begin{prop}[Pohozaev's identity]
\label{poho inde}
Let $u \in H^2 \cap H^1_0(\Omega), |x|\, u \in L^2(\Omega)$ then we have, 
\begin{equation}
\label{phoz 1}
 \re \intOm  \Delta \overline{u} \;  ( \,  \frac{d}{2} \, u + x.\nabla u ) \,  dx  = - \intOm \left| \nabla u \right|^2 dx 
 + \frac{1}{2}  \intb   \left| \nabla u \right|^2 \; (x.\nn ) \,  d\sigma(x) .
\end{equation}
\begin{multline}
\label{phoz 2}
  \re \displaystyle  \intOm \Delta \overline{u} \; \left(  \nabla u.\frac{x}{|x|} + (\frac{d-1}{2} )  \frac{u }{|x|}    \right) \,dx =  -\frac{ (d-1)(d-3)}{4}\intOm \frac{|u|^2}{|x|^3} \,  dx  - \intOm \frac{ \left| \nablaT u \right|^2}{|x|} \, dx
  \\ +  \frac{1}{2} \intb \left| \nabla u \right|^2 \frac{x. \nn }{ |x|} \, d\sigma(x),
\end{multline}
  where $\left| \nablaT u \right|^2 :=\left| \nabla u \right|^2- \left|\frac{x}{|x|}. \nabla u \right|^2 $ and $\nn$ is the outward unit normal vector.
\end{prop}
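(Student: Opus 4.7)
The plan is to prove both identities by integration by parts, starting from the general formula $\re \intOm \Delta \overline u \,\varphi\, dx = -\re \intOm \nabla\overline u \cdot \nabla\varphi\,dx + \re \intb \partial_\nn \overline u \, \varphi \, d\sigma$ applied to $\varphi$ equal to each of the two test functions in the statement, and then reducing the bulk term further by the divergence theorem. The key simplification is the Dirichlet condition $u = 0$ on $\partial\Omega$: all tangential derivatives of $u$ vanish there, so $\nabla u = (\partial_\nn u)\,\nn$ on $\partial\Omega$ and in particular $|\nabla u|^2 = |\partial_\nn u|^2$. The regularity $u \in H^2 \cap H^1_0(\Omega)$ together with $|x|u \in L^2(\Omega)$ is sufficient to justify all the manipulations, the contributions at infinity vanishing by a standard truncation argument; the negative powers of $|x|$ appearing in \eqref{phoz 2} are harmless because $|x|$ is bounded below on $\Omega$.

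For \eqref{phoz 1} I split $\tfrac{d}{2} u + x\cdot\nabla u$ into its two summands. The Dirichlet condition immediately gives $\re \intOm \Delta \overline u\, u\, dx = -\intOm |\nabla u|^2\, dx$. For $\re \intOm \Delta \overline u\,(x\cdot\nabla u)\, dx$, one integration by parts produces a boundary contribution which, by the observation above, equals $\intb (x\cdot\nn)|\nabla u|^2\,d\sigma$; for the bulk term, expanding $\partial_j(x_k \partial_k u) = \partial_j u + x_k \partial_j\partial_k u$ and applying the pointwise identity $\re(\partial_j \overline u \,\partial_j\partial_k u) = \tfrac12 \partial_k |\nabla u|^2$ rewrites it as
\[
  \re \intOm \nabla\overline u \cdot \nabla(x\cdot\nabla u)\,dx = \intOm |\nabla u|^2\,dx + \tfrac12 \intOm x\cdot\nabla|\nabla u|^2\,dx.
\]
A final application of the divergence theorem (with $\mathrm{div}\,x = d$) turns the last integral into $\intb(x\cdot\nn)|\nabla u|^2\,d\sigma - d\intOm |\nabla u|^2\,dx$. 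Reassembling the pieces with their signs yields \eqref{phoz 1}.

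The proof of \eqref{phoz 2} follows the same template with the vector field $x/|x|$ in place of $x$. The pointwise identities one needs are $\partial_j(x_k/|x|) = \delta_{jk}/|x| - x_jx_k/|x|^3$, $\mathrm{div}(x/|x|) = (d-1)/|x|$ and $\mathrm{div}(x/|x|^3) = (d-3)/|x|^3$, together with
\[
  \partial_j\overline u \,\Bigl(\tfrac{\delta_{jk}}{|x|} - \tfrac{x_jx_k}{|x|^3}\Bigr)\partial_k u = \frac{|\nabla u|^2}{|x|} - \frac{|x\cdot\nabla u|^2}{|x|^3} = \frac{|\nablaT u|^2}{|x|}.
\]
This identity is precisely what produces the tangential-gradient term on the right-hand side; the factor $(d-1)(d-3)/4$ in front of $\intOm |u|^2/|x|^3\,dx$ arises after integrating by parts the term $\tfrac{d-1}{2}\intOm (x\cdot\nabla|u|^2)/|x|^3\,dx$ that comes from $\nabla(u/|x|)$, using the divergence of $x/|x|^3$. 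The two boundary contributions from the two pieces of the integrand combine, once again via $\nabla u = (\partial_\nn u)\,\nn$ on $\partial\Omega$, into the single term $\tfrac12 \intb |\nabla u|^2 (x\cdot\nn)/|x|\,d\sigma$. The only delicate point in the whole argument is bookkeeping: keeping the signs straight under the convention that $\nn$ is the outward normal to $\Omega$ (so pointing into the obstacle), and verifying that the contributions at infinity vanish. Notably, no use is made here of the convexity of the obstacle; that hypothesis will enter only in the subsequent sections to exploit $x\cdot\nn \le 0$ on $\partial\Theta$.
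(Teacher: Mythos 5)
Your proposal is correct and follows essentially the same route as the paper: integration by parts against the multipliers $u$, $x\cdot\nabla u$, $u/|x|$ and $\tfrac{x}{|x|}\cdot\nabla u$, the pointwise identity $\re(\partial_j\overline u\,\partial_j\partial_k u)=\tfrac12\partial_k|\nabla u|^2$, the divergence identities for $x$, $x/|x|$ and $x/|x|^3$, and the Dirichlet condition to reduce boundary terms to $|\nabla u|^2$ times the appropriate weight. The only cosmetic difference is bookkeeping: you eliminate $\re\intOm\Delta\overline u\,u$ at the start, while the paper reintroduces it (and the analogous term with weight $1/|x|$) at the end before invoking the same cancellations.
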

\begin{proof}
Using integration by parts and the fact that $u$ satisfies Dirichlet boundary condition (i.e $u=0$ on $\partial \Omega$), we obtain
\begin{align*}
    \re \intOm \Delta \overline{u} \, (x.\nabla u ) \, dx   &= - \sum_{j=1}^d \re \intOm  \partial_{x_j}
 \overline{u} \, \partial_{x_j} u  \, dx -\sum_{j,k=1}^d \re \intOm   \, x_k  \, \partial_{x_j}\partial_{x_k} u \,  \partial_{x_j} \overline{u} \, dx 
 \\ &+  \intb   \left| \nabla u \right|^2 \; (x.\nn ) \,  d\sigma(x) 
 \\ & = - \intOm | \nabla u |^2 \, dx + \frac{1}{2} \sum_{k=1}^d \intOm | \nabla u |^2 \, dx - \frac{1}{2} \sum_{k=1}^d \intb |\nabla u |^2  \, x_k n_k \, d\sigma(x)
     \\ &+  \intb   \left| \nabla u \right|^2 \; (x.\nn ) \,  d\sigma(x) 
\\ &= - \intOm | \nabla u |^2 \, dx - \frac{d}{2}  \re \intOm \Delta \overline{u} \, u \, dx  + \frac{1}{2 } \intb   \left| \nabla u \right|^2 \; (x.\nn ) \,  d\sigma(x).
\end{align*}
This conclude the proof of \eqref{phoz 1}. Now let us prove \eqref{phoz 2}, using the same argument as above and the fact that,
\begin{equation*}
\partial_{x_k} \left( \frac{x_j}{|x|} \right)= \begin{cases}
 \frac{1}{|x|}-\frac{x_j^2}{|x|^3}\;  \quad \text{ if } j=k ,  \\ \\  -\frac{x_j \, x_k}{|x|^3}  \quad  \quad \text{ if } j \neq k ,
\end{cases}
\end{equation*}
we obtain 
\begin{align*}
    \re \displaystyle  \intOm \Delta \overline{u} \;   \nabla u.\frac{x}{|x|} \, dx &= -\frac{1}{2}  \left[ \sum_{j,k=1}^d \intOm \partial_{x_k} \overline{u} \; \partial_{x_k}   \partial_{x_j} u \; \frac{x_j}{|x|} + \partial_{x_k}u \; \partial_{x_k} \partial_{x_j} \overline{u} \; \frac{x_j}{|x| } \, dx   \right]
    \\ &- \re \left[ \sum\limits_{\substack{j=1 }}^{d} \intOm \partial_{x_j}\overline{u} \;  \partial_{x_j} u \left( \frac{1}{|x|}-\frac{x_j^2}{|x|^3} \right) \, dx  \right] + \re \left[ \sum\limits_{\substack{j,k=1 \\ j \neq k}}^{d}  \intOm \partial_{x_k} \overline{u} \; \partial_{x_j} u \;  \frac{x_j\, x_k }{|x|^3} \,  dx   \right]
    \\ &+  \intb \left| \nabla u \right|^2 \frac{(x.\nn) }{|x|} d\sigma(x) 
  \end{align*}

\begin{align*}
\re \displaystyle  \intOm \Delta \overline{u} \;   \nabla u.\frac{x}{|x|} \, dx    &=   -\frac{1}{2} \left[ \sum_{j=1}^d \intOm \frac{x_j}{|x|} \partial_{x_j} \left( \left| \nabla u  \right|^2  \right) \, dx  \right]-\intOm \frac{\left| \nabla u \right|^2 }{|x|} \, dx +  \intOm  \left| \frac{x}{|x|}. \nabla  u  \right|^2 \, \frac{1}{|x|} \, dx     
 \\ &+  \intb \left| \nabla u \right|^2 \frac{(x.\nn)}{|x|} d\sigma(x)  \\
 &= \left(\frac{d-1}{2}  \right)\intOm \frac{\left| \nabla u  \right|^2 }{|x|} \, dx - \intOm \frac{\left| \nablaT u \right|^2 }{|x|} \, dx + \frac{1}{2}\intb \left| \nabla u \right|^2 \frac{(x.\nn)}{|x|} d\sigma(x) 
\\ &= \left( \frac{d-1}{2} \right) \re \intOm - \Delta \overline{u} \; u \, \frac{1}{|x|} \, dx + \left( \frac{d-1}{2} \right)  \re \sum_{k=1}^d \intOm  \partial_{x_k} \overline{u} \, u  \; \frac{x_k}{|x|^3} \, dx 
\\ & - \intOm \frac{\left| \nablaT u \right|^2 }{|x|} \, dx + \frac{1}{2}\intb \left| \nabla u \right|^2 \frac{(x.\nn)}{|x|} d\sigma(x)  .
\end{align*}
Using the fact that, $$ \displaystyle \left(\frac{d-1}{2} \right) \re \sum_{k=1}^d \intOm \partial_{x_k}\overline{u} \, u \, \frac{x_k}{|x|^3} \, dx = -\frac{ (d-1)(d-3)}{4}\intOm \frac{|u|^2}{|x|^3} \,  dx,$$ \\ 
we obtain  \eqref{phoz 2},
which concludes the proof of Proposition \ref{poho inde}.
\end{proof}

\section{Existence of blow-up solution}
\label{sectionBlow-upAnyDimension}
This section is devoted to the proofs of Theorem \ref{theo outside spehre} and \ref{theo-convex}. We assume $d\in \{2,3 \}. $ Nevertheless, the computations below still valid for $d \geq 4$ if an appropriate Cauchy theory is available. \\ 

Denote: \begin{equation*}
\Upsilon_1(u(t)):=\intOm |x| \left|u(t,x)\right|^2 dx, \qquad 
    \Upsilon_2(u(t)):= \intOm  \left| x \right|^2 \left| u(t,x) \right|^2 dx.
\end{equation*}

We will start by proving the following virial identities in the exterior of a convex obstacle, in particular in the exterior of a ball which is needed in the proof of Theorem \ref{theo outside spehre}.
\subsection{Virial identities in exterior domain}

\begin{prop}
\label{Propo 1&2 deri of Up1 & Up2}
Assume that $\Theta$ is a smooth, compact and convex obstacle. \\ 
Let $u_0 \in H^2 \cap H^1_0(\Omega), |x|\, u_0 \in L^2(\Omega)$ and let $u$ be the corresponding solution of the \NNls equation.  Then 
\begin{align}
\label{1 deri of |.|^2}
\frac{d}{dt}\Upsilon_2(u(t))&= 4 \; \im \intOm \overline{u}(t,x) \; x.\nabla u(t,x) dx.
\\ \label{2 deri of |.|^2}
 \frac{1}{16} \frac{d^2}{dt^2} \Upsilon_2(u(t))&= E [u]- \frac{1}{2} \left( \frac{d}{2}  - \frac{d+2}{p+1} \right)\intOm |u|^{p+1} \, dx - \frac{1}{4 } \intb \left| \nabla u \right|^2 \, (x.\nn) \, d\sigma(x) .
  \end{align}
  And 
\begin{equation}
\label{1 deri of |.|}
\hspace*{-1.65cm} \frac{d}{dt}\Upsilon_1(u(t))= 2 \; \im \sum_{j=1}^d \intOm \overline{u}(t,x) \; \frac{x_j}{|x|}.\partial_{x_j} u(t,x) dx. 
\end{equation}
\begin{multline}
\label{2 deri of |.|}
  \frac{1}{16}  \frac{d^2}{dt^2} \Upsilon_1(u(t)) =  \frac{(d-1)(d-3)}{16}  \intOm \frac{|u|^2}{|x|^3} \,  dx    -\frac{(d-1)(p-1)}{8(p+1)}\intOm \frac{|u|^{p+1}}{|x|} \, dx \\ 
  +\frac{1}{4}  \intOm \frac{ \left| \nablaT u \right|^2}{|x|} \, dx
 -\frac{1}{8}  \intb \left| \nabla u \right|^2 \frac{x. \nn }{ |x|} \, d\sigma(x).
\end{multline}
where $\left| \nablaT u \right|^2 :=\left| \nabla u \right|^2- \left|\frac{x}{|x|}. \nabla u \right|^2 $ and $\nn$ is the outward unit normal vector.
\end{prop}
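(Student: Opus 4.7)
The plan is to compute $\frac{d}{dt}\Upsilon_j$ and $\frac{d^2}{dt^2}\Upsilon_j$ by direct differentiation under the integral sign, substituting the \Nls equation and integrating by parts. The Dirichlet condition $u|_{\partial\Omega}=0$ kills every boundary contribution whose integrand contains $u$ or $\bar u$, leaving only terms in which all derivatives fall on $u$ (which produce the $|\nabla u|^2 (x.\nn)$ boundary terms) plus bulk integrals.

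For the first derivatives, I would begin from the mass-density identity $\partial_t|u|^2 = -2\,\im(\bar u\,\Delta u)$, obtained by multiplying \Nls by $\bar u$ and taking imaginary parts. Then
\begin{equation*}
\frac{d}{dt}\Upsilon_j(u(t)) = -2\,\im \intOm \varphi_j(x)\,\bar u\,\Delta u\,dx,\qquad \varphi_1=|x|,\;\varphi_2=|x|^2,
\end{equation*}
and one integration by parts (whose boundary contribution vanishes since $\bar u=0$ on $\partial\Omega$) distributes the gradient between $\varphi_j$ and $\bar u$. The resulting piece $-2\,\im\intOm \varphi_j |\nabla u|^2\,dx$ is real and drops out, and the remaining piece yields \eqref{1 deri of |.|^2} using $\nabla|x|^2=2x$ and \eqref{1 deri of |.|} using $\nabla|x|=x/|x|$.

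For the second derivatives, I would differentiate the expressions just obtained and then symmetrize by a further integration by parts before invoking the equation. Schematically, for a vector-valued weight $\Phi$,
\begin{equation*}
\frac{d}{dt}\im\intOm \bar u\,(\Phi\cdot\nabla u)\,dx = 2\,\im\intOm \bar u_t\,(\Phi\cdot\nabla u)\,dx\;-\;\im\intOm u_t\,\bar u\,(\nabla\cdot\Phi)\,dx,
\end{equation*}
the boundary term from the second integration by parts vanishing thanks to the Dirichlet condition. Substituting $u_t=i(\Delta u+|u|^{p-1}u)$ converts the right-hand side into a combination of $\re\intOm \Delta\bar u\,(\Phi\cdot\nabla u)\,dx$, a nonlinear piece handled via $\re(|u|^{p-1}\bar u\,\nabla u)=\frac{1}{p+1}\nabla|u|^{p+1}$ followed by integration by parts (using $\nabla\cdot\Phi$ and the Dirichlet condition), and a piece $\re\intOm\Delta u\,\bar u/|x|\,dx$ for the $\Upsilon_1$ computation. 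At this point the Pohozaev identity \eqref{phoz 1} (taking $\Phi=x$, so $\nabla\cdot\Phi=d$) converts the $\re\intOm\Delta\bar u(x\cdot\nabla u)dx$ integral into the geometric quantity that, after recognizing the energy functional $E[u]$ and grouping $\intOm|u|^{p+1}$ coefficients, yields \eqref{2 deri of |.|^2}. For the $\Upsilon_1$ computation the analogous substitution with $\Phi=x/|x|$, $\nabla\cdot\Phi=(d-1)/|x|$, together with \eqref{phoz 2}, gives \eqref{2 deri of |.|}.

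The main obstacle is the algebraic bookkeeping in \eqref{2 deri of |.|}, because identity \eqref{phoz 2} is stated for the combination $(x/|x|)\cdot\nabla u + \tfrac{d-1}{2}\,u/|x|$; to isolate $\re\intOm \Delta\bar u\,(x/|x|\cdot\nabla u)dx$ alone one must evaluate $\re\intOm \Delta u\,\bar u/|x|\,dx$ by a separate integration by parts, which produces exactly $-\intOm|\nabla u|^2/|x|\,dx - \tfrac{d-3}{2}\intOm|u|^2/|x|^3\,dx$ (using $\nabla\cdot(x/|x|^3)=(d-3)/|x|^3$ and $\re(\bar u\nabla u)=\tfrac12\nabla|u|^2$). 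The two contributions proportional to $(d-1)(d-3)/4\cdot\intOm|u|^2/|x|^3\,dx$ must then cancel, leaving the asserted coefficient $(d-1)(d-3)/16$. All these manipulations are justified by $u\in C(I;H^2\cap H^1_0(\Omega))$ from Proposition \ref{well-posed}, together with $|x|\,u_0\in L^2(\Omega)$; the negative powers of $|x|$ are harmless because the obstacle is assumed to contain the origin, so $|x|\geq m>0$ on $\Omega$.
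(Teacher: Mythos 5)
Your proposal is correct and follows essentially the same route as the paper: differentiate under the integral, substitute the equation, integrate by parts using the Dirichlet boundary condition, and invoke the Pohozaev identities \eqref{phoz 1}--\eqref{phoz 2} together with $\re\left(|u|^{p-1}\bar{u}\,\nabla u\right)=\tfrac{1}{p+1}\nabla |u|^{p+1}$. The only cosmetic difference is that you symmetrize before substituting the equation and, for $\Upsilon_1$, isolate $\re\int_{\Omega}\Delta\bar{u}\,\bigl(\tfrac{x}{|x|}\cdot\nabla u\bigr)\,dx$ by separately evaluating $\re\int_{\Omega}\Delta u\,\tfrac{\bar{u}}{|x|}\,dx$, whereas the paper directly recognizes the exact combination appearing in \eqref{phoz 2}; the two bookkeepings are equivalent and your coefficients check out.
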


\begin{proof}
Multiplying the equation by $|x|^2 \overline{u}$ and taking the imaginary part we get,
$$\im \intOm i \partial_t u |x|^2 \overline{u } dx + \im \intOm \Delta u |x|^2 \overline{u } dx = -\im \intOm |u|^{p-1} u |x|^2 \overline{u} dx=0. $$
Which yields,  $$ \displaystyle \frac{1}{2}\frac{d}{dt} \Upsilon_2 (u(t))=   \frac{1}{2} \frac{d}{dt} \intOm |x|^2 \left|u(t,x)\right|^2  dx = - \im \intOm |x|^2 \Delta u \; \overline{u} \, dx. $$ \\ 
Integration by parts ensures $$\frac{1}{2} \frac{d}{dt} \intOm |x|^2 |u(t,x)|^2 dx = 2 \sum_{k=1}^d \im \intOm x_k \partial_{x_k} u \, \overline{u}  dx = 2 \, \im \intOm \overline{u } \;  x.\nabla u \, dx.$$
This implies \eqref{1 deri of |.|^2}. 
Now let us compute the second derivative of $\Upsilon_2$. 
\begin{align*}
\frac{d^2}{dt^2} \Upsilon_2(u(t)):&= 4 \frac{d}{dt } \im \intOm \overline{u} \, x.\nabla u \, dx \\ 
& = 4 \left( \im \intOm \partial_t \overline{u} \;  x.\nabla u dx + \im \intOm \overline{u} \;  x.\nabla(\partial_t u ) dx    \right)  
\\ & = 4 \left( \im \intOm (-i\Delta \overline{u} -i |u|^{p-1}\overline{u} ) \; x.\nabla u \; dx + \im \intOm \overline{u}\; 
x.\nabla(i\Delta u + i |u|^{p-1} u )dx 
\right) \\ &= 4 \bigg[ \underbrace{ \re \intOm - \Delta \overline{u } \;  x.\nabla u  \,  dx }_{\rm I_1}
+ \underbrace{ \re \intOm \overline{u} \, x.\nabla (\Delta u) \, dx }_{\rm I_2}  + \underbrace{\re \intOm -|u|^{p-1} \overline{u}\; x.\nabla u \, dx }_{\rm I_3}  \\ & + \underbrace{ \re \intOm \overline{u} \, x.\nabla (|u|^{p-1} u ) \; dx }_{\rm I_4} \bigg].
\end{align*}
Next, we compute each integral apart, using integration by parts and the Dirichlet boundary condition, i.e., $u=0 $ on $\partial {\Omega}.$
\begin{align*}
    \rm{I_2}:&= \re \sum_{k=1}^d \intOm \overline{u} \; x_k \partial_{x_k} \Delta u dx 
    = \sum_{k=1}^d \re \intOm - \partial_{x_k} (\overline{u}  x_k) \Delta u \, dx + \re \sum_{k=1}^{d} \intb \overline{u} \, \Delta u \;  (x_k n_k)\,  d\sigma(x) 
\\ &= \re \intOm -\nabla \overline{u}.x \, \Delta u \, dx - d \re \intOm \overline{u} \, \Delta u \, dx .
 \end{align*}
 Hence, 
 \begin{equation*}
    {\rm{ I_1+I_2}}:= -2 \re \intOm \Delta \overline{u} \; ( x.\nabla u + \frac{d}{2} \, u ) \, dx .
 \end{equation*}
 
Using  Pohozaev's Identity \eqref{phoz 1}, we get 
\begin{align*}
   \rm{I_1+I_2}:=& 2 \intOm \left| \nabla u \right|^2 dx   - \intb \left| \nabla u \right|^2 \; (x.\nn ) \,  d\sigma(x). 
\end{align*}
\begin{equation*}
   { \rm{I_4}}:=  \re \intOm \overline{u} \, x.\nabla (|u|^{p-1} u ) \; dx = - \re \intOm \nabla \overline{u}.x \; |u|^{p-1}\, u \, dx - d \intOm |u|^{p+1} \, dx.
\end{equation*}
Using the fact that
\begin{equation}
\label{nabla |u|^p+1}
\nabla ( |u|^{p+1}) = (p+1)\, |u|^{p-1} \; \re \left(\overline{u} \, \nabla u \right),  
\end{equation} 
we obtain
\begin{align*}
    \rm{I_3 + I_4} &=  -2 \re \intOm |u|^{p-1} \overline{u}\; x.\nabla u \, dx - d \intOm  |u|^{p+1} \, dx  
    \\ &= -\frac{2}{p+1} \re \intOm x.\nabla( |u|^{p+1} ) \, dx - d \intOm |u|^{p+1} \, dx 
    \\ &= \left( \frac{2 \,d }{p+1}  - d \right)
    \intOm |u|^{p+1} \, dx.
\end{align*}
Which yields 
\begin{equation*}
    \frac{d^2}{dt^2} \Upsilon_2(u(t)) = 8 \intOm | \nabla u |^2 \, dx 
 + \left(\frac{8 \, d }{p+1} - 4d \right) \intOm |u|^{p+1} \, dx - 4 \intb |\nabla u |^{2} \, (x.\nn) \, d\sigma(x).
 \end{equation*}
Thus
\begin{equation*}
    \frac{1}{16}  \frac{d^2}{dt^2} \Upsilon_2(u(t))= E [u]- \frac{1}{2} \left( \frac{d}{2}  - \frac{d+2}{p+1} \right) \intOm |u|^{p+1} \, dx - \frac{1}{4} \intb \left| \nabla u \right|^2 \, (x.\nn) \, d\sigma(x).
\end{equation*}
This concludes the proof of \eqref{2 deri of |.|^2}. Now let us compute the first derivative of $\Upsilon_1$. Similarly, multiplying the equation by $|x| \overline{u}$ and taking the imaginary part we get,
\begin{equation*}
 \frac{1}{2}   \frac{d}{dt} \Upsilon_1(u(t))= \frac{d}{dt} \intOm |x| |u(t,x)|^2 dx= \im \intOm -\Delta u \, |x|\, \overline{u} \,  dx 
    = \im  \sum_{j=1}^d \intOm \frac{x_j}{|x|} \partial_{x_j} u \; \overline{u} \, dx . 
\end{equation*}
Thus, we obtain \eqref{1 deri of |.|}
\begin{equation*}
\frac{d}{dt}\Upsilon_1(u(t))= 2 \; \im \sum_{j=1}^d \intOm \overline{u}(t,x) \; \frac{x_j}{|x|}.\partial_{x_j} u(t,x) dx = 2\;  \WW(u(t)).  
\end{equation*}
For the second derivative of $\Upsilon_1$, using the \Nls we get 
\begin{align*}
\frac{d^2}{dt^2} \Upsilon_1(u(t))&= 2 \im  \sum_{j=1}^d \intOm \partial_{t} \overline{u} \frac{x_j}{|x|}  \partial_{x_j}u \, dx   +   2 \im  \sum_{j=1}^d \intOm \overline{u} \, \frac{x_j}{|x|} \partial_{x_j} (\partial_{t} u) \, dx      
\\&= \underbrace{2 \re \sum_{j=1}^d \intOm -\Delta \overline{u} \, \frac{x_j}{|x|} \, \partial_{x_j} u \, dx }_{\rm{J_1}}+ \underbrace{2 \re  \sum_{j=1}^d \intOm \overline{u}\, \frac{x_j}{|x|} \, \partial_{x_j}(\Delta u ) \, dx  }_{\rm{J_2}} 
\\ &+ \underbrace{2 \re \sum_{j=1}^d  \intOm -|u|^{p-1} \, \overline{u} \, \frac{x_j}{|x|} \, \partial_{x_j}u \, dx   }_{\rm{J_3}}+ \underbrace{2 \re  \sum_{j=1}^d \intOm \overline{u}   \, \frac{x_j}{|x|} \, \partial_{x_j} ( |u|^{p-1}u) \, dx     }_{\rm{J_4}} .
\end{align*}

We will compute each integral apart, using again integration by parts and the Dirichlet boundary condition, i.e., $u=0 $ on $\partial {\Omega}.$
\begin{align*}
    \rm{J_2}:&= 2 \re  \sum_{j=1}^d  \intOm \overline{u} \, \frac{x_j}{|x|} \, \partial_{x_j}(\Delta u ) \, dx  = 2 \re  \sum_{j=1}^d \intOm - \partial_{x_j} \left( \overline{u} \frac{x_j}{|x|} \right) \Delta u \, dx  
 \\ &= -2  \re   \intOm \nabla \overline{u}.\frac{x}{|x|} \, \Delta u \,  dx  -2  \re  \intOm  \overline{u}  \, \frac{(d-1)}{|x|} \, \Delta u \, dx .
\end{align*}
Hence, 
\begin{align*}
    \rm{J_1+J_2}:&= -4  \re   \intOm  \Delta \overline{u} \, \nabla u.\frac{x}{|x|} \, \,  dx  -2  \re  \intOm   \Delta \overline{u}   \, \frac{(d-1)}{|x|} \, u \, dx 
    \\ &= -4 \left[ \re \intOm \Delta \overline{u} \left( \nabla u . \frac{x}{|x|} + \frac{(d-1)}{2} \, u\, \frac{1}{|x|} \, \right) \, dx     \right] .
\end{align*}

Using  \eqref{phoz 2},  we get 
\begin{align*}
    \rm{J_1+J_2}&= -4 \left[ -\frac{ (d-1)(d-3)}{4}\intOm \frac{|u|^2}{|x|^3} \,  dx   - \intOm \frac{ \left| \nablaT u \right|^2}{|x|} \, dx
  +  \frac{1}{2} \intb \left| \nabla u \right|^2 \frac{x. \nn }{ |x|} \, d\sigma(x)  \right]
  \\ &=   (d-1)(d-3)\intOm \frac{|u|^2}{|x|^3} \,  dx   +4  \intOm \frac{ \left| \nablaT u \right|^2}{|x|} \, dx-  2 \intb \left| \nabla u \right|^2 \frac{x. \nn }{ |x|} \, d\sigma(x).   
\end{align*}
\begin{align*}
{ \rm{J_4}}:&= 2 \re  \sum_{j=1}^d  \intOm \overline{u} \, \frac{x_j}{|x|} \, \partial_{x_j}( |u|^{p-1} u ) \,  dx    
    \\& = -2 \re  \sum_{j=1}^d \intOm \partial_{x_j} \overline{u} \, \frac{x_j}{|x|} |u|^{p-1} u \, dx     
    -2 \re  \sum_{j=1}^d  \intOm \overline{u} \;  \partial_{x_j}\left( \frac{x_j}{|x|}\right) |u|^{p-1} u \, dx    
    \\ &= -2 \re \intOm \nabla \overline{u}. \frac{x}{|x|} \, |u|^{p-1} u \, dx -2(d-1)\intOm \frac{|u|^{p+1}}{|x|} \, dx. 
 \end{align*}
Due to \eqref{nabla |u|^p+1}, we have 
 \begin{align*}
     \rm{J_3+J_4}:&= -4 \re \intOm \nabla \overline{u}. \frac{x}{|x|} |u|^{p-1} u \, dx - 2 (d-1) \intOm \frac{|u|^{p+1}}{|x| } \, dx
     \\ &= \frac{-4}{p+1}  \intOm \frac{x}{|x|}. \nabla( |u|^{p+1}) \, dx -2(d-1) \intOm \frac{|u|^{p+1}}{|x|} \, dx 
     \\ &= \frac{-2(d-1)(p-1)}{p+1} \intOm \frac{|u|^{p+1}}{|x|} \, dx . 
  \end{align*} 
Summing all terms, we get 
\begin{align*} 
\frac{d^2}{dt^2} \Upsilon_1(u(t))&= \frac{d^2}{dt^2} \intOm |x| \, |u|^2 \, dx 
\\&=     (d-1)(d-3) \intOm \frac{|u|^2}{|x|^3} \,  dx    +4  \intOm \frac{ \left| \nablaT u \right|^2}{|x|} \, dx - \frac{2(d-1)(p-1)}{p+1} \intOm \frac{|u|^{p+1}}{|x|} \, dx 
\\ &-  2 \intb \left| \nabla u \right|^2 \frac{x. \nn }{ |x|} \, d\sigma(x).
\end{align*}
This concludes the proof of Proposition \ref{Propo 1&2 deri of Up1 & Up2}.
\end{proof}
\subsection{Existence of blow-up solution in the exterior of a ball}
\begin{proof}[Proof of Theorem \ref{theo outside spehre}]
Assume $\Theta=B(0,R)$ and $p\geq 5.$ Let $u_0 \in \mathcal{H}(\Omega)$ (for $d=2,3,$ it suffies to take $u_0 \in H^2 \cap H^1_0(\Omega),$ we will later relax the assumption to $u_0 \in H^1_0(\Omega)$ if $d=2$), $|x|u_0 \in L^2(\Omega),$ $E[u_0]+ \frac{1}{8R^2} M[u_0]<0 $ if d=2 and $E[u]<0$ if $d \geq 3.$ 
Let $u$ be the corresponding solution of \Nls outside the ball $B(0,R)$, with maximal time interval $I$ of existence. Define the variance used in this proof: 
\begin{equation*}
    \mathcal{V}(u(t)):=\intOm \left( |x|^2-2R|x|+10 \right) |u(t,x)|^2 \, dx.
\end{equation*}
From Proposition  \ref{Propo 1&2 deri of Up1 & Up2} we have 
\begin{multline}
\label{2-der of total Virial}
\frac{1}{16}\frac{d^2}{dt^2} \mathcal{V}(u(t)) = E [u] -\frac{R}{2}  \intOm \frac{ \left| \nablaT u \right|^2}{|x|} \, dx - \frac{1}{2} \left( \frac{d}{2}  - \frac{d+2}{p+1} \right)  \intOm |u|^{p+1} \, dx \\ 
\qquad \; \,  +\frac{R(d-1)(p-1)}{4(p+1)} \intOm \frac{|u|^{p+1}}{|x|} \; dx -   \frac{ R(d-1)(d-3)}{8}   \intOm \frac{|u|^2}{|x|^3} \,  dx  \\ 
   - \frac{1}{4 } \intb \left| \nabla u \right|^2 \, (x.\nn) \, d\sigma(x)  + \frac{R}{4}  \intb \left| \nabla u \right|^2 \frac{x. \nn }{ |x|} \, d\sigma(x) .   \qquad \qquad \qquad \qquad \;
\end{multline}
 Let us control first the boundary terms.  Recall that, in Theorem \ref{theo outside spehre} we assume that $\Omega$ is the exterior of the ball $B(0,R).$ We denotes by $\nn$ the outward unit normal vector, i.e the normal vector exterior to $\Omega$, so that $|x|=R$ and $x.\nn \leq 0$ on $\partial \Omega=\partial B(0,R)$.

\begin{equation*}
- \frac{1}{4 } \intb \left| \nabla u \right|^2 \, (x.\nn) \, d\sigma(x)  + \frac{R}{4}  \intb \left| \nabla u \right|^2 \frac{x. \nn }{ |x|} \, d\sigma(x)   =\frac{1}{4} \intb \left| \nabla u \right|^2 (x. \nn ) \left(\frac{R}{|x|}-1\right) \, d\sigma(x) = 0 .
\end{equation*}

Now, we will estimate the nonlinear terms. Using the fact that $\frac{1}{|x|}\leq \frac 1R,$ for all $x\in \Omega,$ and $p \geq 5 ,$ we have
\begin{align*}
- \frac{1}{2} \left( \frac{d}{2}  - \frac{d+2}{p+1} \right)\intOm |u|^{p+1} \, dx & +  \, \frac{R(d-1)(p-1)}{4(p+1)}\intOm \frac{|u|^{p+1}}{|x|} \, dx 
\\ &\leq \left[ - \frac{1}{2} \left( \frac{d}{2}  - \frac{d+2}{p+1} \right)+\frac{(d-1)(p-1)}{4(p+1)}   \right] \intOm |u|^{p+1} \, dx 
\\ & = - \left(\frac{p-5}{4(p+1)} \right) \intOm |u|^{p+1} \, dx \leq 0 .
\end{align*}
Finally, for all $d \neq 2$ one can see that,
\begin{equation}
\frac{- R(d-1)(d-3)}{8}  \intOm \frac{|u|^2}{|x|^3} \,  dx \leq 0. 
\end{equation}
In particular, for $d=3$ we have $\frac{- (d-1)(d-3)}{8}  \displaystyle \intOm \frac{|u|^2}{|x|^3} \,  dx=0$. \\ 
For $d=2,$ we use the fact that, $E[u]+\frac{1}{8 R^2}M[u] <0 $ and $\frac{1}{|x|} \leq \frac 1R$ for all $x\in \Omega.$ Indeed, 
$$ E[u]+\frac{R}{8} \intOm \frac{|u|^2}{|x|^3} \,  dx \leq E[u]+\frac{1}{8 R^2}M[u] <0 .  $$
This implies that the second derivative of the variance is bounded by a negative constant, for all $t\in I$. 
\begin{equation*}
\frac{d^2}{dt^2} \mathcal{V}(u(t)) \leq -A ,  \; \text{ where \;  -A=} \left\{ \begin{array}{rrrrrrrr}
\begin{aligned}
	 E[u]+\frac{1}{8 R^2}M[u]&<0 \; \text{ if } d=2,  \\ 
     E[u]&<0 \; \text{ if } d=3.
		\end{aligned}
			\end{array}
		\right.
\end{equation*}
Moreover, integrating twice over $t$, we have that 
\begin{equation}
\label{dervVV}
\mathcal{V}(u(t))\leq -A t^2+B t+C, \text { where  }B=\frac{d}{dt} \mathcal{V}(u_0) \text{ and } C=\mathcal{V}(u_0).
\end{equation}
By density \eqref{dervVV} remains true, if $d=2,$ assuming that $u_0 \in  H^1_0(\Omega)$ and $|x| u_0 \in L^2(\Omega).$ Due to \eqref{dervVV}, there exists $T^*$ such that $ \mathcal{V}(u(T^*))<0 ,$ which is a contradiction. Then the length of the maximal time interval of existence $I$ is finite and one can prove that the solution $u$ blows up in finite time. This concludes the proof of Theorem~\ref{theo outside spehre}.
\end{proof}


\subsection{Existence of blow-up solution in the exterior of a convex obstacle}
\label{Existence-blow-up-Convex}
In this section, we extend the previous results in the exterior of a ball to a smooth, compact, convex obstacle $\Theta. $ We prove Theorem \ref{theo-convex}, assuming without loss of generality that $0 \in \Theta,$ and we suppose that $\Theta$ satisfies the following property for  $d\geq 2$:
\begin{equation}
\label{M/m}
\frac{M}{m} < \frac{d}{d-1}, \text{ where } M= \displaystyle  \max_{x \in \partial \Theta} \left(|x|\right) \text{ and }m= \displaystyle \min_{x\in \partial \Theta}\left( |x|  \right). 
\end{equation}
We use the following variance identity: 
\begin{equation*}
    \mathcal{V}(u(t)):=\intOm \left( |x|^2- 2 M\, |x|+10 \right) |u(t,x)|^2 \, dx.
\end{equation*}

\begin{proof}[Proof of Theorem \ref{theo-convex}]
Assume that $\Theta$ satisfies \eqref{M/m} and $p\geq 1+\frac{4}{d-\frac{M}{m}(d-1)}. $ Let  $u_0 \in \mathcal{H},$ $|x|u_0 \in L^2(\Omega),$ and suppose that
$E[u_0]+ \frac{M}{8m^3} M[u_0]<0 ,$ if $d=2,$ and $E[u_0]<0 ,$ if $ d\geq 3.$ Let $u$ be the corresponding solution of the \NNls equation in the exterior of a convex obstacle $\Theta,$ such that the assumption \eqref{M/m} holds, with maximal time interval $I$ of existence.  \\ 

From Proposition \ref{Propo 1&2 deri of Up1 & Up2}, we have 
\begin{multline}
\label{2-der of total Virial}
\frac{1}{16}\frac{d^2}{dt^2} \mathcal{V}(u(t)) = E [u] -\frac{M}{2}  \intOm \frac{ \left| \nablaT u \right|^2}{|x|} \, dx - \frac{1}{2} \left( \frac{d}{2}  - \frac{d+2}{p+1} \right)  \intOm |u|^{p+1} \, dx \\ 
\qquad \qquad \quad  \; \; \,  +\frac{M(d-1)(p-1)}{4(p+1)} \intOm \frac{|u|^{p+1}}{|x|} \; dx 
    -   \frac{ M (d-1)(d-3)}{8}  \intOm \frac{|u|^2}{|x|^3} \,  dx \\ 
     - \frac{1}{4 } \intb \left| \nabla u \right|^2 \, (x.\nn) \, d\sigma(x)  + \frac{M}{4}  \intb \left| \nabla u \right|^2 \frac{x. \nn }{ |x|} \, d\sigma(x) .   \qquad \qquad \qquad \; \; \;\,
\end{multline}
We first control the boundary terms.  Recall that $\Omega$ is the exterior of a convex obstacle $\Theta$  and $0 \in \Theta$, so that $x\cdot \nn \leq 0$ for all $x\in \partial \Omega.$ As $ M=\displaystyle \max_{x \in \partial \Theta}(|x|)=\max_{x \in \partial \Omega}  (|x|),$ then $(\frac{M}{|x|}-1)\geq 0$ for all $x \in \partial \Omega.$ Thus, 
\begin{equation*}
- \frac{1}{4 } \intb \left| \nabla u \right|^2 \, (x.\nn) \, d\sigma(x)  + \frac{M}{4}  \intb \left| \nabla u \right|^2 \frac{x. \nn }{ |x|} \, d\sigma(x)   = \frac 14 \intb \left| \nabla u \right|^2 (x. \nn )  \left( \frac{M}{|x|} -1\right)) d\sigma(x) \leq 0.
\end{equation*}
Next, we control the nonlinear terms using the fact that $\frac{M}{m} < \frac{d}{d-1},$ $  p \geq 1+ \frac{4}{d-\frac{M}{m}(d-1)}$ and $\frac{1}{|x|}\leq \frac 1m, $ for all $ x \in \Omega.$ 
\begin{align*}
- \frac{1}{2} \left( \frac{d}{2}  - \frac{d+2}{p+1} \right)\intOm |u|^{p+1} \, dx & +  \, \frac{M(d-1)(p-1)}{4(p+1)}\intOm \frac{|u|^{p+1}}{|x|} \, dx 
\\ &\leq \left[ - \frac{1}{2} \left( \frac{d}{2}  - \frac{d+2}{p+1} \right)+\frac{M(d-1)(p-1)}{4m(p+1)}   \right] \intOm |u|^{p+1} \, dx 
\\ & = - \left(\frac{(p-1)\left(d-\frac{M}{m}(d-1)  \right)-4}{4(p+1)} \right) \intOm |u|^{p+1} \, dx \leq 0 .
\end{align*}
For all $d \neq 2,$ one can see that all other terms are negative. For $d=2,$ we use the fact that, $\frac{1}{|x|} \leq \frac 1m$ for all $x\in \Omega$ and $E[u]+\frac{M}{8 m^3}M[u] <0 ,$ to obtain 
$$ E[u]+\frac{M}{8} \intOm \frac{|u|^2}{|x|^3} \,  dx \leq E[u]+\frac{M}{8 m^3}M[u] <0 .  $$
This implies that the second derivative of the variance is bounded by a negative constant, for all $t\in I$. 
\begin{equation*}
\frac{d^2}{dt^2} \mathcal{V}(u(t)) \leq -\mathcal{A} ,  \; \text{ where }\;  -\mathcal{A}= \left\{ \begin{array}{rrrrrrrr}
\begin{aligned}
	 E[u]+\frac{M}{8m^3}M[u]&<0 \; \text{ if } d=2,  \\ 
     E[u]&<0 \; \text{ if } d \geq 3.
		\end{aligned}
			\end{array}
		\right.
\end{equation*}

Using the same argument as in the proof of Theorem \ref{theo outside spehre}, one can prove that the solution~$u$ blows up in finite time and this concludes the proof of Theorem \ref{theo-convex}

\end{proof}

\section{Existence of blow-up symmetric solution}
\label{sectionBlow-upSymmetricSolution2D}
In this section we prove Theorem \ref{theo sym}. For the sake of simplicity, we will give the proof for $d=2,$ then we will generalize the result to any dimension $d\geq 3.$ \\

 Assume that $d = 2, \; p\geq 3 $ and $\Theta$ is invariant by the symmetry, $x_j\longmapsto -x_j ,$ Recall that define $\mathcal{S}_2$ is defined as the following 
 $$ \mathcal{S}_2 := \{ u_0 \in \mathcal{H}(\Omega)   \backslash    u_0(-x_1,x_2)=u_0(x_1,-x_2)=-u_0(x_1,x_2) \} ,$$ 
here, we can consider $\mathcal{H}(\Omega)=H^1_0(\Omega).$ By uniqueness for the Cauchy problem, we have  $$u(t,-x_1,x_2)=u(t,x_1,-x_2)=-u(t,x_1,x_2). $$ 

Due to the two symmetry assumptions, we will reduce the problem to a quarter of the space. We define 
\begin{align*}
 \Omega:&=\bigcup \Omega^{\pm \pm}:=  \Omega^{+} \, \cup  \, \Omega^{-}\quad  
 \text{ and } \quad \Omega^{\pm \pm}:=\{ (x_1,x_2)  \in \Omega \backslash \, x_1 \in \R^\pm \text{ and }  x_2 \in \R^\pm  \},  \\ 
 \;  \Omega^{\pm}:&=\{(x_1,x_2)\in \Omega \backslash \, x_1 \in \R^\pm  \} \quad \text{ and } \quad \Omega_{\pm}:=\{(x_1,x_2)  \in \Omega \backslash \, x_2 \in \R^{\pm}\}.
\end{align*}
Furthermore, since $u_{{\vert_{x_1=0}}}=0$ and $u_{\vert_{x_2=0}}=0,$ one can see that $u$ satisfies the Dirichlet boundary conditions on each set defined above, $\Omega^{\pm\pm}$ and $ \Omega^{\pm}.$ \\

%
  
The variance identity here is the following:  
Let $C>0$ be a positive constant to be specified later,
$${\rm{V}}(u(t)):= \intOm \left( |x|^2-C |x_1|-C |x_2|+C^2 \right) \left| u(t,x) \right|^2 \, dx  .$$
Denote 
\begin{equation*}
\Gamma_1(u(t)):= \intOm |x_1| |u(t,x)|^2 dx , \qquad  \qquad \; \,  \Gamma_2(u(t)):= \intOm  |x_2| |u(t,x)|^2 dx.
\end{equation*}

\begin{prop}
\label{deriv-dt-1&2-Gamma 1&2}
Let $u_0 \in H^2 \cap H^1_0(\Omega)$ and $|x|\, u_0 \in L^2(\Omega)$ such that $u_0 \in \mathcal{S}_2$ and let $u$ be the corresponding solution of the \NNls equation. Then 
\begin{align}
\label{dt gamma 1}
    \frac{d}{dt} \Gamma_1(u(t))&= 8\,  \im \int_{\Omega^{++}} \partial_{x_1} u(t,x) \, \overline{u}(t,x) \, dx, \\
\label{dt^2 gamma 1}
    \frac{d^2}{dt^2} \Gamma_1(u(t))&= 8 \int_{\partial \Omega^{++}} \left| \nabla u(t,x) \right|^2 |n_1| \, dx,
\end{align}
and 
\begin{align}
\label{dt gamma 2}
\frac{d}{dt} \Gamma_2(u(t))&= 8 \, \im \int_{\Omega^{++}} \partial_{x_2} u (t,x) \,\overline{u}(t,x) dx, \\
\label{dt^2 gamma 2}
 \frac{d^2}{dt^2} \Gamma_2(u(t)) &= 8 \int_{\partial \Omega^{++}} \left| \nabla u(t,x) \right|^2 |n_2| \, dx.
\end{align}
\end{prop}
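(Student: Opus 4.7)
The plan is to exploit the two anti-symmetry assumptions on $u$ to reduce every integral over $\Omega$ to one over $\Omega^{++}$, and then to differentiate twice in $t$ using \Nls while performing integrations by parts whose boundary contributions all vanish: on $\partial\Omega\cap\overline{\Omega^{++}}$ by the Dirichlet condition, on $\{x_1=0\}\cap\overline{\Omega^{++}}$ and $\{x_2=0\}\cap\overline{\Omega^{++}}$ by the vanishing of $u$ forced by the anti-symmetry $u_0\in\mathcal{S}_2$.

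For the first derivatives, I would first observe that, since $|u|^2$ is symmetric under both $x_1\mapsto-x_1$ and $x_2\mapsto-x_2$, we have $\Gamma_1(u(t))=4\int_{\Omega^{++}}x_1|u|^2\,dx$ and $\Gamma_2(u(t))=4\int_{\Omega^{++}}x_2|u|^2\,dx$. Using the identity $\partial_t|u|^2=-2\im(\bar u\Delta u)$ (a direct consequence of \Nls) and integrating $-8\im\int_{\Omega^{++}}x_1\bar u\,\Delta u\,dx$ by parts, the boundary integrals disappear because $\bar u=0$ on $\partial\Omega\cap\overline{\Omega^{++}}$ and on $\{x_2=0\}$, while the weight $x_1$ itself vanishes on $\{x_1=0\}$. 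This gives \eqref{dt gamma 1}; the derivation of \eqref{dt gamma 2} is identical with the roles of $x_1$ and $x_2$ interchanged.

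For the second derivative I would differentiate \eqref{dt gamma 1} in $t$ and substitute \Nls again. The nonlinear piece collapses, after using $\nabla(|u|^{p+1})=(p+1)|u|^{p-1}\re(\bar u\,\nabla u)$, to $\frac{p-1}{p+1}\int_{\Omega^{++}}\partial_{x_1}(|u|^{p+1})\,dx$, which is $0$ by the divergence theorem because $u\equiv0$ on every face of $\partial\Omega^{++}$. Integrating $\bar u\,\partial_{x_1}\Delta u$ by parts in $x_1$ (boundary terms again killed by $\bar u=0$), the linear piece reduces to $-16\,\re\int_{\Omega^{++}}\Delta\bar u\,\partial_{x_1}u\,dx$. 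One more integration by parts gives
\begin{equation*}
\re\int_{\Omega^{++}}\Delta\bar u\,\partial_{x_1}u\,dx=\int_{\partial\Omega^{++}}\re(\partial_\nn\bar u\,\partial_{x_1}u)\,d\sigma-\frac{1}{2}\int_{\partial\Omega^{++}}|\nabla u|^2\,n_1\,d\sigma,
\end{equation*}
which must then be evaluated face by face: on $\partial\Omega\cap\overline{\Omega^{++}}$, Dirichlet implies $\nabla u=(\partial_\nn u)\nn$ so the face contributes $\frac{1}{2}|\nabla u|^2 n_1$; on $\{x_1=0\}$, anti-symmetry forces $u=\partial_{x_2}u=0$ and $n_1=-1$, producing $-\frac{1}{2}|\nabla u|^2$; on $\{x_2=0\}$, $n_1=0$ and $\partial_{x_1}u=0$ kill the contribution entirely.

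The main remaining step, and the one that requires genuine input from the geometry, is to recognize these signed boundary terms as $-\frac{1}{2}|\nabla u|^2|n_1|$ on each face. This is where the convexity and symmetry of $\Theta$ enter: for any $x\in\partial\Omega$ with $x_1>0$, the reflected point $(-x_1,x_2)$ also lies on $\partial\Theta$ by symmetry, and convexity forces the horizontal segment between them to lie in $\overline\Theta$; hence $-e_1$ is an inward direction at $x$, which yields $\nu_{\Theta,1}\ge0$ and therefore $n_1\le0$, so that $n_1=-|n_1|$ on $\partial\Omega\cap\overline{\Omega^{++}}$. Combined with $|n_1|=1$ on $\{x_1=0\}$ and $|n_1|=0$ on $\{x_2=0\}$, this produces $\re\int_{\Omega^{++}}\Delta\bar u\,\partial_{x_1}u\,dx=-\frac{1}{2}\int_{\partial\Omega^{++}}|\nabla u|^2|n_1|\,d\sigma$, and multiplication by $-16$ yields \eqref{dt^2 gamma 1}. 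The identical argument with $x_1\leftrightarrow x_2$ gives \eqref{dt^2 gamma 2}. I expect the convexity/symmetry reduction to $|n_1|$ to be the main conceptual obstacle; the rest is careful bookkeeping of integrations by parts.
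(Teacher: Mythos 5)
Your proposal is correct, and it reaches the identities by a derivation that differs from the paper's in one genuine respect. The overall framework is the same: reduce to the quarter domain $\Omega^{++}$ using the two anti-symmetries, note that $u$ satisfies Dirichlet conditions on the axis faces, and use convexity plus symmetry to get the sign $n_1\le 0$ on $\partial\Omega^{++}$ (your support-plane argument via the reflected point $(-x_1,x_2)\in\overline{\Theta}$ is exactly the right justification, and is in fact more explicit than the paper, which simply asserts the sign). Where you diverge is in computing $\frac{d^2}{dt^2}\Gamma_1$: you differentiate the localized momentum $8\,\im\int_{\Omega^{++}}\bar u\,\partial_{x_1}u\,dx$ directly, kill the nonlinear contribution as an exact derivative of $|u|^{p+1}$, and evaluate the resulting boundary integral face by face, using $\nabla u=(\partial_{\nn}u)\,\nn$ on the Dirichlet faces. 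The paper instead writes $\frac{d}{dt}\Gamma_1=8(\alpha-\beta)$ with $\alpha,\beta$ the virial quantities centered at $0$ and at $e_1$, applies the already-proved identity \eqref{2 deri of |.|^2} (hence the Pohozaev identity \eqref{phoz 1}, translated to the center $e_1$) on $\Omega^{++}$ to each, and subtracts so that all interior terms cancel, leaving only $-\int_{\partial\Omega^{++}}|\nabla u|^2\,n_1\,d\sigma$. Your route is more self-contained and makes the boundary bookkeeping (including the vanishing of $\partial_{x_1}u$ on $\{x_2=0\}$ and the cancellation pattern on $\{x_1=0\}$) fully visible, at the cost of redoing an integration-by-parts computation; the paper's route is shorter because it recycles Proposition \ref{Propo 1&2 deri of Up1 & Up2}, but it hides the face-by-face analysis inside the claim that the virial identity holds on the quarter domain. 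Both share the same mild formal issue (traces of $\Delta u$ for $H^2$ solutions, corners of $\Omega^{++}$), so nothing in your argument falls below the paper's own level of rigor.
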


\begin{proof}
Multiply the equation by $|x_1| \bar{u}$ and take the imaginary part to get: $$ \im \intOm i \partial_t u |x_1| \overline{u} \, dx + \im \intOm \Delta u |x_1| \overline{u}  \, dx= -\underbrace{ \im \intOm |u|^{p-1}u |x_1| \overline{u} \, dx}_{=0} , $$
which yields 
\begin{align*}
    \frac{1}{2} \frac{d}{dt} \intOm |x_1| |u(t,x)|^2 \, dx &= - \im \intOm |x_1| \Delta u \, \overline{u} \, dx \\ 
    \frac{d}{dt} \Gamma_1(u(t)):&= \frac{d}{dt} \intOm |x_1| |u(t,x)|^2 dx= -2  \im \intOm |x_1| \, \Delta u \, \overline{u} \, dx .
\end{align*}
Integration by parts ensures,
\begin{align*}
   \im \intOm |x_1| \Delta u \, \overline{u} \, dx &= -  \im  \int_{\Omega} \partial_{x_1} ( |x_1|) \,\partial_{x_1} u \,  \overline{u}\, dx   \\
  & = - \left(\im \int_{\Omega^{+}} \partial_{x_1}u \,  \overline{u} \, dx - \im \int_{\Omega^{-}}  \partial_{x_1}u    \, \overline{u} \, dx    \right) \\ 
  &= -4 \;  \im \int_{\Omega^{++}} \partial_{x_1}u \, \overline{u} \, dx,
\end{align*}
which yields \eqref{dt gamma 1}. Now, let us compute the second derivative of $\Gamma_1(u(t)).$\\ 
Denote: \begin{equation*}
 \alpha(t):= \im \int_{\Omega^{++}} \overline{u}(t,x) \; x.\nabla u(t,x) dx , \quad  \beta(t):=  \im \int_{\Omega^{++}} \overline{u}(t,x) \,  (x-e_1).\nabla u(t,x) dx. 
\end{equation*}
Thus, we have $$ \frac{d}{dt}\Gamma_1(u(t))=8(\alpha(t)-\beta(t)).$$ 
Due to the symmetry properties of $u$, one can see that $\alpha(t)$ is equal to $\frac{1}{16}\frac{d}{dt} \Upsilon_2(t)$ and $\beta(t)$ is equal to $\eqref{1 deri of |.|^2}$ applied to $(x-e_1)$, where $e_1=(1,0)$.
By \eqref{2 deri of |.|^2}, we obtain, \begin{align*}
\frac{d}{dt}\alpha(t)&= 4 E_+[u]
 - 2 \left( \frac{d}{2}  - \frac{d+2}{p+1} \right)\int_{\Omega^{++}} |u|^{p+1} \, dx 
 -   \int_{\partial{\Omega^{++}}} \left| \nabla u \right|^2 \, (x.\nn) \, d\sigma(x) , \\ \frac{d}{dt}\beta(t)&= 4 E_+[u]
  - 2 \left( \frac{d}{2}  - \frac{d+2}{p+1} \right)\int_{\Omega^{++}} |u|^{p+1} \, dx 
    -  \int_{\partial{\Omega^{++}}} \left| \nabla u \right|^2 \, ((x-e_1).\nn) \, d\sigma(x) .
\end{align*}
where, $\displaystyle E_{+}[u]=\frac{1}{2}\left\| \nabla u \right\|_{L^2(\Omega^{++})}- \frac{1}{p+1} \int_{\Omega^{++}}  |u|^{p+1} dx.   $ Taking the difference between these two equalities, we get
\begin{align*}
\frac{d}{dt} \left(  \alpha(t)-\beta(t) \right)=- \int_{\partial \Omega^{++}} \left| \nabla u \right|^{2} (e_1.\nn) \, dx= -  \int_{\partial \Omega^{++}} \left| \nabla u \right|^{2} n_1 \, dx,
\end{align*}
which yields \begin{equation*}
    \frac{d^2}{dt^2} \Gamma_1(u(t)):= -8  \int_{\partial \Omega^{++}} \left| \nabla u \right|^2 n_1 \, dx .
\end{equation*}
Using the symmetry properties, the convexity of the obstacle $\Theta$ and the fact that $\nn$ is the unit outward normal vector, we have $n_1 \leq 0$ on $\partial \Omega^{++}.$
Then, we obtain
 \begin{equation*}
 \frac{d^2}{dt^2} \Gamma_1(u(t))= 8 \int_{\partial \Omega^{++}} \left| \nabla u \right|^2 |n_1| \, dx.
 \end{equation*}
Recall that  $$\Gamma_2:= \intOm  |x_2| |u(t,x)|^2 dx.$$
We multiply the equation by $|x_2| \bar{u}$ and take the imaginary part to get: $$ \im \intOm i \partial_t u |x_2| \overline{u} \, dx + \im \intOm \Delta u |x_2| \overline{u}  \, dx= - \im \intOm |u|^{p-1}u |x_2| \overline{u} \, dx =0. $$
Which yields 
\begin{align*}
    \frac{1}{2} \frac{d}{dt} \intOm |x_2| |u(t,x)|^2 \, dx &= - \im \intOm |x_2| \Delta u \, \overline{u} \, dx \\ 
    \frac{d}{dt} \Gamma_2(u(t)):&= \frac{d}{dt} \intOm |x_2| |u(t,x)|^2 dx= -2  \im \intOm |x_2| \, \Delta u \, \overline{u} \, dx .
\end{align*}
Integration by parts ensures, 
\begin{align*}
   \im \intOm |x_2| \Delta u \, \overline{u} \, dx &= -  \im  \int_{\Omega} \partial_{x_2} ( |x_2|) \,  \partial_{x_2}u \, \overline{u}\, dx   \\
  & = - \left(\im \int_{\Omega_{+}} \partial_{x_2}u \,  \overline{u} \, dx - \im \int_{\Omega_{-}}  \partial_{x_2}u    \, \overline{u} \, dx    \right) 
\\  &= -4 \im \int_{\Omega^{++}} \partial_{x_2}u \,  \overline{u} \, dx,
\end{align*}
which yields \eqref{dt gamma 2}.
The computation of $\frac{d^2}{dt^2}\Gamma_2(t)$ is similar to the one of $\frac{d^2}{dt^2}\Gamma_1(t),$ the only difference is to apply \eqref{1 deri of |.|^2} and \eqref{2 deri of |.|^2} to $(x-e_2),$ where $e_2=(0,1)$ instead of $(x-e_1).$ Thus, we get
\begin{equation*}
    \frac{d^2}{dt^2} \Gamma_2(u(t)):= - 8 \int_{\partial \Omega^{++}} \left| \nabla u \right|^2 n_2 \, dx , 
\end{equation*}
As $\nn$ is the unit outward normal vector, we have $n_2 \leq 0$ on $ \partial \Omega^{++}$ using the symmetry properties and the convexity of the obstacle $\Theta.$ Then we obtain,
\begin{equation*}
    \frac{d^2}{dt^2} \Gamma_2(u(t)):= 8 \int_{\partial \Omega^{++}} \left| \nabla u \right|^2 |n_2| \, dx  .
\end{equation*}
This concludes the proof of Proposition \ref{deriv-dt-1&2-Gamma 1&2}.
\end{proof}
\begin{proof}[Proof of Theorem \ref{theo sym}]
Assume $d=2$ and $ p\geq 3$. Let $u_0 \in \mathcal{S}_2$ (i.e., $u_0 \in H^1_0(\Omega), \; u_0(-x_1,x_2)=u_0(x_1,-x_2)=-u_0(x_1,x_2)),$ $|x|u_0 \in L^2(\Omega)$ and $E[u_0]<0$ and llet $u$ be the corresponding solution of \Nls with maximal time interval $I$ of existence. From Proposition \ref{deriv-dt-1&2-Gamma 1&2} and \ref{Propo 1&2 deri of Up1 & Up2}, we deduce the second derivative of the variance for $d=2,$
\begin{multline}
\label{dt^2 V on 2D}
 \frac{d^2}{dt^2} {\rm{V}}(u(t))=16 E [u]- 8 \left( \frac{p-3}{p+1} \right)\intOm |u|^{p+1} \, dx \\
 +  \int_{\partial \Omega^{++}} \left| \nabla u \right|^2 \, \big[16 |x.\nn| -8C \left(|n_1| + |n_2| \right) \big] \, d\sigma(x) .
\end{multline}

Using the fact that $p \geq 3$ and $E[u]<0 $, one can see that the first two terms are negative, i.e., $16 E[u]-8\left(\frac{p-3}{p+1}\right)\intOm |u|^{p+1}dx \leq 0.$
Choosing $C\geq 2 \displaystyle\max_{x\in \partial \Omega}\left( \frac{|x.\nn|}{|n_1|+|n_2|}\right)$ this implies that,
\begin{equation*}
\frac{d^2}{dt^2} {\rm{V}}(u(t)) \leq -A, \text{ \; where \;} A>0. 
\end{equation*}

Using the same argument as in the first proof, one can prove that the length of the maximal time interval of existence $I$ is finite. Therefore, the solution $u$ blows up in finite time and this concludes the proof of Theorem~\ref{theo sym} in dimension $2.$ For any dimension $d\geq 3$, we should suppose that, $u_0\in \mathcal{S}_d$, i.e., 
$u_0(x_1,..,x_i,..,x_d)=-u_0(x_1,..,-x_i,..,x_d),$ for $i=1,2,...,d$ and we use the following variance: \begin{equation*}
    {\rm{V}}(u(t))=\intOm \left( |x|^2-C\sum_{i=1}^d |x_i|+C^2 \right) \left| u(t,x) \right|^2 \, dx. 
\end{equation*} 
One can check that 
\begin{multline}
\label{dt^2 V on any D}
 \frac{d^2}{dt^2} {\rm{V}}(u(t))=16 E [u]- 8 \left( \frac{d}{2}-\frac{d+2}{p+1} \right)\intOm |u|^{p+1} \, dx \\
 +  \int_{ \partial\{ x_i \geq 0 ,  \, 1 \leq i \leq d \} } \left| \nabla u \right|^2 \, \big[2^{d+2} |x.\nn| -2^{d+1} C \sum_{i=1}^d |n_i|\big] \, d\sigma(x) .
\end{multline}
Using the fact that $p \geq 1+\frac{4}{d},$ $E[u]<0 $ and choosing $  C$ such that $$ C \geq 2 \displaystyle \max_{x\in \partial \Omega}  \left( |x.\nn|  \left(\displaystyle \sum_{i=1}^d|n_i|\right)^{-1}\right),$$ we get 
\begin{equation*}
\frac{d^2}{dt^2} {\rm{V}}(u(t)) \leq -A, \text{ \; where \;} A>0. 
\end{equation*}
Then $u $ blows up in finite time for any dimension $d\geq 3$ and this concludes the proof of Theorem~\ref{theo sym}.  
\end{proof}

\section{Ground state threshold for blow-up}
\label{sectionThresholdBlow-upSolution}
This section is devoted to the proof of Theorem \ref{theo threshold on d=2}. First, assume that $d=2$ and $s_c>0,$ i.e.,  $p> 3$ and  $\Theta$ is invariant by the symmetry. Let $u_0 \in \mathcal{S}_2,\,  (u_0\in   H^1_0(\Omega),$ $u_0(-x_1,x_2)=u_0(x_1,-x_2)=-u_0( x_1,x_2)),$ and $|x|\, u_0 \in L^2(\Omega)$. Let $u$ be the corresponding solution of the \NNls equation with maximal time interval $I$ of existence. Moreover, we consider the same modified variance as in the proof of Theorem \ref{theo sym}.  Let $C>0$ be a positive constant to be specified later,

$${\rm{V}}(u(t)):= \intOm \left(|x|^2-C|x_1|-C|x_2|+ C^2 \right) \left| u(t,x) \right|^2 \, dx $$ 
\begin{lem}
Let $u_0 \in H^1_0(\Omega)$ satisfy 
\begin{align}
\label{EM}
    M[u_0]^{1-s_C} E[u_0]^{s_c} &<   M[Q]^{1-s_c} E[Q]^{s_c}. \\
    \label{MQ}
    \left\| u_0 \right\|_{L^2(\Omega)}^{1-s_c} \left\| \nabla u_0 \right\|_{L^2(\Omega)}^{s_c} &> \left\| Q \right\|_{L^2\footnotesize{\left(\R^2 \right)}}^{1-s_c} \left\| \nabla Q \right\|_{L^2\footnotesize{\left(\R^2 \right)}}^{s_c}.
\end{align}
Then the corresponding solution $u$ to \Nls satisfy, 
\begin{equation}
\label{MQ(t)}
  \forall t \in I ,\qquad   \left\| u_0 \right\|_{L^2(\Omega)}^{1-s_c} \left\| \nabla u(t)\right\|_{L^2(\Omega)}^{s_c} > \left\| Q \right\|_{L^2\footnotesize{\left(\R^2 \right)}}^{1-s_c} \left\| \nabla Q \right\|_{L^2\footnotesize{\left(\R^2 \right)}}^{s_c}.
\end{equation}
 \end{lem}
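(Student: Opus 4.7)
The plan is to apply the sharp Gagliardo--Nirenberg inequality on all of $\R^2$ to the zero extension of $u(t)$ and then reduce the conclusion to a continuity argument for the scale-invariant quantity
\begin{equation*}
\varphi(t):=\|u_0\|_{L^2(\Omega)}^{1-s_c}\,\|\nabla u(t)\|_{L^2(\Omega)}^{s_c},
\end{equation*}
using conservation of mass and energy together with the variational characterization of the ground state $Q$.

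The first step is the observation that any $v\in H^1_0(\Omega)$, extended by zero, belongs to $H^1(\R^2)$ with the same $L^2$, $L^{p+1}$ and $\dot H^1$ norms; hence \eqref{Nirem} applies verbatim to $u(t)$. Plugging the explicit value of $C_{GN}$ given in \eqref{c_GN} and using \eqref{ 1 iden de Q}, \eqref{ 2 iden de Q} and \eqref{energyQ} to rewrite everything in terms of $\|Q\|_{L^2(\R^2)}$, $\|\nabla Q\|_{L^2(\R^2)}$ and $E[Q]$, a direct algebraic manipulation should produce a lower bound of the shape
\begin{equation*}
M[u_0]^{1-s_c}\,E[u(t)]^{s_c} \;\geq\; g\!\left(\varphi(t)\right),\qquad t\in I,
\end{equation*}
where $g\colon[0,\infty)\to\R$ is an explicit scalar function independent of $u$. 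A one-variable calculus check then shows that $g$ admits a unique positive critical point
\begin{equation*}
y^{\ast}\;:=\;\|Q\|_{L^2(\R^2)}^{1-s_c}\,\|\nabla Q\|_{L^2(\R^2)}^{s_c},
\end{equation*}
at which $g$ attains its maximum $g(y^{\ast})=M[Q]^{1-s_c}E[Q]^{s_c}$. This is the usual Holmer--Roudenko identification of the ground state as the sharp Gagliardo--Nirenberg threshold; the defining relation $s_c=\tfrac{d}{2}-\tfrac{2}{p-1}$ is exactly what makes all the scalings match.

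To conclude, I would combine conservation of mass and energy along the \Nls-flow with hypothesis \eqref{EM} to obtain
\begin{equation*}
g(\varphi(t)) \;\leq\; M[u_0]^{1-s_c}E[u_0]^{s_c} \;<\; M[Q]^{1-s_c}E[Q]^{s_c} \;=\; g(y^{\ast}),\qquad t\in I,
\end{equation*}
so that $\varphi(t)\neq y^{\ast}$ for every $t\in I$. Since $t\mapsto \|\nabla u(t)\|_{L^2(\Omega)}$ is continuous on $I$ by Proposition~\ref{well-posed}, and $\varphi(0)>y^{\ast}$ by \eqref{MQ}, the intermediate value theorem forces $\varphi(t)>y^{\ast}$ throughout $I$, which is precisely \eqref{MQ(t)}. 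The only place that really needs care is the bookkeeping of the Gagliardo--Nirenberg exponents when passing to the scale-invariant formulation; once $g$ is in hand, the rest is just one-variable analysis plus the continuity/conservation argument.
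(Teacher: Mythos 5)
Your proposal follows essentially the same route as the paper's proof: extend $u(t)$ by zero to apply the sharp Gagliardo--Nirenberg inequality on $\R^2$, bound the conserved scale-invariant mass-energy quantity below by a one-variable function of $\left\| \nabla u(t)\right\|_{L^2}\left\| u\right\|_{L^2}^{(1-s_c)/s_c}$ whose unique positive maximizer is the ground-state quantity, and conclude by conservation laws, hypothesis \eqref{EM}, and continuity in time of the gradient norm. Apart from working with the $s_c$-th power of the quantities rather than the paper's $M[u]^{\frac{1-s_c}{s_c}}E[u]$ (a purely cosmetic normalization), this is the same Holmer--Roudenko argument, and it is correct.
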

 \begin{proof}
 The proof of the lemma is the same as in \cite{HoRo07},  \cite{HoRo08} for the proof of blow-up solutions of \nls equation on $\R^{d}.$ We give it for the sake of completeness and for the convenience of the reader. The key point is that a function $f \in H^1_0(\Omega)$ extended by $0$ outside $\Omega$ can be identified to an element of $H^1(\R^2). $ Thus, it satisfies the same Gagliardo-Nirenberg inequality as an element of $H^1(\R^2).$  \\ 
 
 Multiplying the energy by $M[u]^{\frac{1-s_c}{s_c}}$ and applying Gagliardo-Nirenberg's inequality for~$d=~2,$ we have,
 \begin{align*}
 M[u]^{\frac{1-s_c}{s_c}}E[u]&=\frac{1}{2} \left\| \nabla u \right\|^2_{L^2(\Omega)} \left\|u\right\|^{2\frac{(1-s_c)}{s_c}}_{L^2(\Omega)}-
 \frac{1}{p+1} \left\| u \right\|^{p+1}_{L^{p+1}(\Omega)} \left\|u\right\|^{2\frac{(1-s_c)}{s_c}}_{L^2(\Omega)} 
 \\ & \geq \frac{1}{2}\left( \left\| \nabla u \right\|_{L^2(\Omega)} \left\| u \right\|^{\frac{1-s_c}{s_c}}_{L^2(\Omega)} \right)^{2}-\frac{C_{GN}}{p+1}\left( \left\| \nabla u \right\|_{L^2(\Omega)} \left\| u \right\|^{\frac{1-s_c}{s_c}}_{L^2(\Omega)} \right)^{p-1}
 \\ &  \geq f \left(\left\| \nabla u \right\|_{L^2(\Omega)} \left\| u \right\|^{\frac{1-s_c}{s_c}}_{L^2(\Omega)} \right),
 \end{align*}
 where $f(x)=\frac{1}{2} x^2 - \frac{C_{GN}}{p+1} \,  x^{p-1} $. Then, $f'(x)= x-\frac{C_{GN}(p-1)}{p+1} x^{p-2}$, and thus, 
 $f'(x)=0$ for $x_0=0$ and $x_1=\left( \frac{C_{GN}(p-1)}{p+1} \right)^{-\frac{1}{p-3}}=\left\| \nabla Q \right\|_{L^2\footnotesize{\left(\R^2 \right)}} \left\| Q \right\|^{\frac{1-s_c}{s_c}}_{L^2\footnotesize{\left(\R^2 \right)}}$  by \eqref{c_GN}. Since \eqref{Nirem} is attained at ground state $Q$ then we have, $f(\left\| \nabla Q \right\|_{L^2\footnotesize{\left(\R^2 \right)}} \left\| Q \right\|_{L^2\footnotesize{\left(\R^2 \right)}}^{\frac{1-s_c}{s_c}} )= M[Q]^{\frac{1-s_c}{s_c}} E[Q], $ we also have $f(0)=0.$ Thus, the function $f$ is increasing on $(0,x_1)$ and decreasing on $(x_1,\infty)$. Using the energy conservation, we get 
 \begin{equation}
     \label{f}
f \left(\left\| \nabla u \right\|_{L^2(\Omega)} \left\| u \right\|^{\frac{1-s_c}{s_c}}_{L^2(\Omega)} \right) \leq M[u]^{\frac{1-s_c}{s_c}}E[u(t)]<f(x_1) .
\end{equation}
 
If condition \eqref{MQ} holds, i.e $\left\| u_0 \right\|_{L^2(\Omega)}^{1-s_c} \left\| \nabla u_0 \right\|_{L^2(\Omega)}^{s_c} > x_1=\left\| Q \right\|_{L^2\footnotesize{\left(\R^2 \right)}}^{1-s_c} \left\| \nabla Q \right\|_{L^2\footnotesize{\left(\R^2 \right)}}^{s_c}$, then by~\eqref{f} and the continuity of $\left\| \nabla u(t) \right\|_{L^2(\Omega)}$ in time we obtain \eqref{MQ(t)} for all time $t \in I $. 
 \end{proof}
 Moreover, if the conditions \eqref{EM} and \eqref{MQ} holds, then there exists $\delta_1 >0 $ such that 
\begin{equation}
\label{ref EQ}
    M[u_0]^{1-s_c} E[u_0 ]^{s_c} <  (1-\delta_1) M[Q]^{1-s_c} E[Q]^{s_c}. 
\end{equation}
Thus, there exists $\delta_2:=\delta_2(\delta_1) >0 $ such that  
\begin{equation}
\label{ref MQ}
 \forall t \in I,  \qquad   \left\| u_0 \right\|_{L^2(\Omega)}^{1-s_c} \left\| \nabla u(t)\right\|_{L^2(\Omega)}^{s_c} > (1+\delta_2) \left\| Q \right\|_{L^2\footnotesize{\left(\R^2 \right)}}^{1-s_c} \left\| \nabla Q \right\|_{L^2\footnotesize{\left(\R^2 \right)}}^{s_c} .
\end{equation}
Now let us prove that
\begin{multline}
\label{dt^2 rm V threshold}
\frac{d^2}{dt^2} {\rm{V}}(u(t)) \leq  8(p-1) E[u] -4(p-3)  \left\| \nabla u  \right\|_{L^2(\Omega)}^2 \\  +  \int_{\partial \Omega^{++}} \left| \nabla u \right|^2 \, \big[16 |x.\nn| -8C \left(|n_1| + |n_2| \right) \big] \, d\sigma(x).
\end{multline}
From \eqref{dt^2 V on 2D}, we have 
\begin{align*}
\frac{d^2}{dt^2} {\rm{V}}(u(t))&=16 E [u]- 8 \left(  \frac{p-3}{p+1} \right)\intOm |u|^{p+1} \, dx 
 +  \int_{\partial \Omega^{++}} \left| \nabla u \right|^2 \, \big[16 |x.\nn| -8C \left(|n_1| + |n_2| \right) \big] \, d\sigma(x) 
 \\ & \leq 8 \left\| \nabla u \right\|_{L^2}^2- \frac{8(p-1)}{p+1} \intOm |u|^{p+1} dx +  \int_{\partial \Omega^{++}} \left| \nabla u \right|^2 \, \big[16 |x.\nn| -8C \left(|n_1| + |n_2| \right) \big] \, d\sigma(x)
\\ & \leq   8(p-1) E[u] -4(p-3)  \left\| \nabla u  \right\|_{L^2(\Omega)}^2+  \int_{\partial \Omega^{++}} \left| \nabla u \right|^2 \, \big[16 |x.\nn| -8C \left(|n_1| + |n_2| \right) \big] \, d\sigma(x) .
\end{align*}

Multiplying \eqref{dt^2 rm V threshold} by $M[u]^{\frac{1-sc}{s_c} }$ and using \eqref{energyQ} for $d=2$ with the two refined inequalities \eqref{ref EQ} and \eqref{ref MQ}, we have 
\begin{align*}
M[u]^{\frac{1-sc}{s_c} }\frac{d^2}{dt^2} {\rm{V}}(u(t)) & \leq  \left( 8(p-1) E[u] -4(p-3)  \left\| \nabla u  \right\|_{L^2(\Omega)}^2   \right) M[u]^{\frac{1-sc}{s_c} }  
\\& +  \int_{\partial \Omega^{++}} \left| \nabla u \right|^2 \, \big[16 |x.\nn| -8C \left(|n_1| + |n_2| \right) \big] \, d\sigma(x)    M[u]^{\frac{1-sc}{s_c} }
\\& \leq 8(p-1) ( 1-\delta_1) E[Q] M[Q]^{\frac{1-sc}{s_c} }-4(p-3) (1+\delta_2) \left\| \nabla Q \right\|_{L^2\footnotesize{\left(\R^2 \right)}}^2  M[Q]^{\frac{1-sc}{s_c} }\\
&+  \int_{\partial \Omega^{++}} \left| \nabla u \right|^2 \, \big[16 |x.\nn| -8C \left(|n_1| + |n_2| \right) \big] \, d\sigma(x)    M[u]^{\frac{1-sc}{s_c} }
\\ &  \leq 8(p-1) ( 1-\delta_1) \frac{(p-3)}{2(p-1)} \left\| \nabla Q \right\|_{L^2\footnotesize{\left(\R^2 \right)}}^2 M[Q]^{\frac{1-sc}{s_c} }
\\ &-4(p-3) (1+\delta_2) \left\| \nabla Q \right\|_{L^2\footnotesize{\left(\R^2 \right)}}^2  M[Q]^{\frac{1-sc}{s_c} }
\\ & +  \int_{\partial \Omega^{++}} \left| \nabla u \right|^2 \, \big[16 |x.\nn| -8C \left(|n_1| + |n_2| \right) \big] \, d\sigma(x)    M[u]^{\frac{1-sc}{s_c} },
\end{align*}
which yields
\begin{align*}
M[u]^{\frac{1-sc}{s_c} }\frac{d^2}{dt^2} {\rm{V}}(u(t)) &  \leq \big[ 4(p-3)  - 4(p-3) \big]\left\| \nabla Q \right\|_{L^2\footnotesize{\left(\R^2 \right)}}^2 M[Q]^{\frac{1-sc}{s_c} }
\\ & -\big[ 4(p-3) \delta_1+ 4(p-3) \delta_2 \big] \left\| \nabla Q \right\|_{L^2\footnotesize{\left(\R^2 \right)}}^2 M[Q]^{\frac{1-sc}{s_c} }
\\ & +  \int_{\partial \Omega^{++}} \left| \nabla u \right|^2 \, \big[16 |x.\nn| -8C \left(|n_1| + |n_2| \right) \big] \, d\sigma(x)    M[u]^{\frac{1-sc}{s_c} }.
\end{align*}
Using the fact that $p > 3$ and choosing $C \geq 2 \displaystyle\max_{x\in \partial \Omega}\left( \frac{|x.\nn|}{|n_1|+|n_2|}\right)$ imply that the second derivative of the variance is bounded by a negative constant, for all $t\in I$, 
\begin{equation*}
    \frac{d^2}{dt^2} {\rm{V}}(u(t)) \leq -A, \;  \text{ where} \; A>0.
\end{equation*}
Thus the maximal time interval of existence $I$ is finite and the solution $u$ blows up in finite time. This concludes the proof of Theorem~\ref{theo threshold on d=2} in dimension $2$. \\  

Next, we will give the proof for 
dimension $d\geq 3$. For that, we suppose that  $u_0 \in \mathcal{S}_d,$ i.e.,
$$u_0(x_1,..,-x_i,..,x_d)=-u_0(x_1,..,x_i,..,x_d), \quad \text{ for }\; i=1,2,...,d.$$ 
Using the following variance \begin{equation*}
    {\rm{V}}(u(t))=\intOm \left( |x|^2-C\sum_{i=1}^d |x_i|+C^2 \right) \left| u(t,x) \right|^2 \, dx,
\end{equation*} 
we have, 
\begin{multline}
\label{dt^2 rm d=3 V threshold}
 \frac{d^2}{dt^2} {\rm{V}}(u(t))=4d(p-1)E[u]-(2d(p-1)-8)\left\|\nabla u\right\|_{L^2(\Omega)}^2  \\ +  \int_{ \partial\{ x_i \geq 0 ,  \, 1 \leq i \leq d \} } \left| \nabla u \right|^2 \, \bigg[2^{d+2} |x.\nn| -2^{d+1} C \sum_{i=1}^d |n_i|\bigg] \, d\sigma(x).
\end{multline}
Using the same argument as above, one can check that Lemma \ref{EM} remains true for $d\geq 3, $ see \cite{HoRo07}, \cite{Guevara14}.  If the conditions \eqref{EM} and \eqref{MQ} hold then there exists $\delta_1>0, \,\delta_2(\delta_1) >0 $ such that \eqref{ref EQ} and \eqref{ref MQ} are valid for $d\geq3$. Multiplying \eqref{dt^2 rm d=3 V threshold}  by $M[u]^{\frac{1-sc}{s_c} }$ and using  \eqref{energyQ} with the two refined inequalities  \eqref{ref EQ} and \eqref{ref MQ}, we have 
\begin{align*}
M[u]^{\frac{1-sc}{s_c} }\frac{d^2}{dt^2} {\rm{V}}(u(t)) & \leq 4d(p-1) ( 1-\delta_1) E[Q] M[Q]^{\frac{1-sc}{s_c}}\\
&-(2d(p-1)-8) (1+\delta_2) \left\| \nabla Q \right\|_{L^2\footnotesize{\left(\R^d \right)}}^2  M[Q]^{\frac{1-sc}{s_c} }\\
& +  \int_{ \partial\{ x_i \geq 0 ,  \, 1 \leq i \leq d \} } \left| \nabla u \right|^2 \, \bigg[2^{d+2} |x.\nn| -2^{d+1} C \sum_{i=1}^d |n_i|\bigg] \, d\sigma(x)    M[u]^{\frac{1-sc}{s_c} }
\\ &\leq (2d(p-1)-8) ( 1-\delta_1)  \left\| \nabla Q \right\|_{L^2\footnotesize{\left(\R^d \right)}}^2 M[Q]^{\frac{1-sc}{s_c} }
\\ &-(2d(p-1)-8) (1+\delta_2) \left\| \nabla Q \right\|_{L^2\footnotesize{\left(\R^d \right)}}^2  M[Q]^{\frac{1-sc}{s_c} }
\\ & +  \int_{ \partial\{ x_i \geq 0 ,  \, 1 \leq i \leq d \} } \left| \nabla u \right|^2 \, \bigg[2^{d+2} |x.\nn| -2^{d+1} C \sum_{i=1}^d |n_i|\bigg] \, d\sigma(x)    M[u]^{\frac{1-sc}{s_c} },
\end{align*}
which yields
\begin{align*}
 M[u]^{\frac{1-sc}{s_c} }\frac{d^2}{dt^2} {\rm{V}}(u(t)) &\leq   -\big[ (2d(p-1)-8) \delta_1+ (2d(p-1)-8) \delta_2 \big] \left\| \nabla Q \right\|_{L^2\footnotesize{\left(\R^d \right)}}^2 M[Q]^{\frac{1-sc}{s_c} }
\\ & +  \int_{  \partial\{ x_i \geq 0 ,  \, 1 \leq i \leq d \}  } \left| \nabla u \right|^2 \, \bigg[2^{d+2} |x.\nn| -2^{d+1}\, C \sum_{i=1}^d |n_i|\bigg] \, d\sigma(x) \,  M[u]^{\frac{1-sc}{s_c} }.
\end{align*}
Thus $$\displaystyle \frac{d^2}{dt^2} {\rm{V}}(u(t)) \leq -A, \text{ \; where \;} A>0. $$ Provided $p > 1+\frac{4}{d}$ and $ C \geq 2 \displaystyle \max_{x\in \partial \Omega}  \left( |x.\nn|  \left(\displaystyle \sum_{i=1}^d|n_i|\right)^{-1}\right).$  Then the solution $u$ blows up in finite time and this concludes the proof of Theorem \ref{theo threshold on d=2}.

%

\bibliographystyle{acm}
\bibliography{main.bbl}
\end{document}